\DeclareMathOperator{\dist}{dist} \DeclareMathOperator{\CR}{CR}
\DeclareMathOperator{\Var}{Var}
\renewcommand{\Re}{\operatorname{Re}}
\newtheorem{theorem}{Theorem}[section]
\newtheorem{lemma}[theorem]{Lemma}
\newtheorem{proposition}[theorem]{Proposition}
\newtheorem{corollary}[theorem]{Corollary}
\theoremstyle{remark}
\newtheorem*{remark}{Remark}
\title{Limit theorems for critical first-passage percolation on the triangular lattice
}
\author{Chang-Long Yao \thanks{Academy of Mathematics and Systems Science,
CAS, Beijing, China (E-mail: deducemath@126.com) This work was
supported by the National Natural Science Foundation of China (No.
11601505) and the Key Laboratory of Random Complex Structures and
Data Science, CAS (No. 2008DP173182).}}
\begin{document}
\maketitle
\markboth{Sample paper for the {\protect\ntt\lowercase{amsmath}} package}
{Sample paper for the {\protect\ntt\lowercase{amsmath}} package}
\renewcommand{\sectionmark}[1]{}

\begin{abstract}
Consider (independent) first-passage percolation on the sites of the
triangular lattice $\mathbb{T}$ embedded in $\mathbb{C}$.  Denote
the passage time of the site $v$ in $\mathbb{T}$ by $t(v)$, and
assume that $P(t(v)=0)=P(t(v)=1)=1/2$.  Denote by $b_{0,n}$ the
passage time from 0 to the halfplane
$\{v\in\mathbb{T}:\mbox{Re}(v)\geq n\}$, and by $T(0,nu)$ the
passage time from 0 to the nearest site to $nu$, where $|u|=1$.  We
prove that as $n\rightarrow\infty$, $b_{0,n}/\log n\rightarrow
1/(2\sqrt{3}\pi)$ a.s., $E[b_{0,n}]/\log n\rightarrow
1/(2\sqrt{3}\pi)$ and Var$[b_{0,n}]/\log n\rightarrow
2/(3\sqrt{3}\pi)-1/(2\pi^2)$;  $T(0,nu)/\log n\rightarrow
1/(\sqrt{3}\pi)$ in probability but not a.s., $E[T(0,nu)]/\log
n\rightarrow 1/(\sqrt{3}\pi)$ and Var$[T(0,nu)]/\log n\rightarrow
4/(3\sqrt{3}\pi)-1/\pi^2$.  This answers a question of Kesten and
Zhang (1997) and improves our previous work (2014).  From this
result, we derive an explicit form of the central limit theorem for
$b_{0,n}$ and $T(0,nu)$.  A key ingredient for the proof is the
moment generating function of the conformal radii for conformal loop
ensemble CLE$_6$, given by Schramm, Sheffield and Wilson (2009).

\textbf{Keywords}: critical percolation; first-passage percolation;
scaling limit; conformal loop ensemble; law of large numbers;
central limit theorem

\textbf{AMS 2010 Subject Classification}: 60K35, 82B43
\end{abstract}

\section{Introduction}

First-passage percolation (FPP) was introduced by Hammersley and
Welsh in 1965 as a model of fluid flow through a random medium.  We
refer the reader to the recent surveys \cite{26,6}.  In this paper,
we continue our study of critical FPP on the triangular lattice
$\mathbb{T}$, initiated in \cite{20}.  We focus on this particular
lattice because our proof relies on the existence of the scaling
limit of critical site percolation on $\mathbb{T}$ (see
\cite{2,23}), and this result has not been proved for other planar
percolation processes.  For recent progress on general planar
critical FPP, see \cite{21}.

Let $\mathbb{T}=(\mathbb{V},\mathbb{E})$ denote the triangular
lattice, where $\mathbb{V}:=\{x+ye^{\pi
i/3}\in\mathbb{C}:x,y\in\mathbb{Z}\}$ is the set of sites, and
$\mathbb{E}$ is the set of bonds, connecting adjacent sites.  Let
$\{t(v):v\in\mathbb{V}\}$ be an i.i.d. family of Bernoulli random
variables:
\begin{equation*}
P[t(v)=0]=P[t(v)=1]=\frac{1}{2}.
\end{equation*}
We call this model \textbf{Bernoulli critical FPP} on $\mathbb{T}$,
and denote by $P$ its probability measure.  Note that we can view
this model as critical site percolation on $\mathbb{T}$ (see e.g.
\cite{14,5} for background on two-dimensional critical percolation).
We usually represent it as a random coloring of the faces of the
dual hexagonal lattice $\mathbb{H}$, each face centered at $v\in
\mathbb{V}$ being blue ($t(v)=0$) or yellow ($t(v)=1$) with
probability $1/2$ independently of the others. Sometimes we view the
site $v$ as the hexagon in $\mathbb{H}$ centered at $v$.

A \textbf{path} is a sequence $v_0,\ldots,v_n$ of distinct sites of
$\mathbb{T}$ such that $v_{k-1}$ and $v_k$ are neighbors for all
$k=1,\ldots,n$.  For a path $\gamma$, we define its passage time as
$T(\gamma):=\sum_{v\in \gamma}t(v).$  The \textbf{first-passage
time} between two site sets $A,B$ is defined as
\begin{equation*}
T(A,B):=\inf \{T(\gamma):\gamma \mbox{ is a path from a site in $A$
to a site in $B$}\}.
\end{equation*}
For any $u\in\mathbb{C}$ with $|u|=1$, denote by $T(0,nu)$ the
first-passage time from 0 to the nearest site in $\mathbb{V}$ to
$nu$ (if there are more than one such sites, we choose a unique one
by some deterministic method). Denote by $b_{0,n}$ the first-passage
time from 0 to the halfplane $\{v\in \mathbb{V}:\Re(v)\geq n\}$.

Our main theorem below answers a question proposed by Kesten and
Zhang (see (1.10) and (1.11) in \cite{10}):

\begin{theorem}\label{t1}
\begin{align}
&\lim_{n\rightarrow\infty}\frac{b_{0,n}}{\log
n}=\frac{1}{2\sqrt{3}\pi}~~a.s.,\label{t14}\\
&\lim_{n\rightarrow\infty}\frac{E[b_{0,n}]}{\log
n}=\frac{1}{2\sqrt{3}\pi},\label{t12}\\
&\lim_{n\rightarrow\infty}\frac{\Var[b_{0,n}]}{\log
n}=\frac{2}{3\sqrt{3}\pi}-\frac{1}{2\pi^2}.\label{t22}
\end{align}
For each $u\in \mathbb{C}$ with $|u|=1$,
\begin{align}
&\lim_{n\rightarrow\infty}\frac{T(0,nu)}{\log
n}=\frac{1}{\sqrt{3}\pi}~~\mbox{in probability but not
a.s.,}\label{t13}\\
&\lim_{n\rightarrow\infty}\frac{E[T(0,nu)]}{\log
n}=\frac{1}{\sqrt{3}\pi},\label{t11}\\
&\lim_{n\rightarrow\infty}\frac{\Var[T(0,nu)]}{\log
n}=\frac{4}{3\sqrt{3}\pi}-\frac{1}{\pi^2}.\label{t21}
\end{align}
\end{theorem}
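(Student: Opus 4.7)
The plan is to reduce $b_{0,n}$ to a count of nested monochromatic circuits around $0$, transfer this count to its CLE$_6$ scaling limit, and then evaluate the limiting constants via renewal theory applied to the explicit distribution of CLE$_6$ conformal radii due to Schramm, Sheffield and Wilson. Concretely, I would first establish that $b_{0,n}$ agrees, up to an $L^2$-bounded additive error, with a count of nested yellow circuits around $0$ inside the half-disk of radius $n$, using the vertex-min-cut duality on planar graphs together with polychromatic arm estimates for critical site percolation; this refines observations used in \cite{10,20}. Via Camia--Newman convergence of discrete loop ensembles to CLE$_6$, the resulting discrete count is compared with the number of nested CLE$_6$ loops $\mathcal{L}_0\supset\mathcal{L}_1\supset\cdots$ around $0$, with conformal radii $R_0>R_1>\cdots$ as seen from $0$.

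\textbf{Renewal theorem and explicit constants.}
A fundamental property of CLE$_\kappa$ (for $\kappa\in(8/3,8]$) is that the log-ratios $\xi_k:=\log(R_{k-1}/R_k)$ are i.i.d., and SSW provide the explicit moment generating function of $\xi_1$. The discrete loop count is therefore asymptotically a classical renewal counting process, and the elementary and functional renewal theorems give
\begin{equation*}
\frac{N_n}{\log n}\xrightarrow{\mathrm{a.s.}}\frac{1}{\mu},\qquad \frac{\Var[N_n]}{\log n}\to\frac{\sigma^2}{\mu^3},
\end{equation*}
where $N_n=\#\{k:R_k\ge 1/n\}$, $\mu=E[\xi_1]$ and $\sigma^2=\Var[\xi_1]$. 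Specializing the SSW moment generating function to $\kappa=6$ and differentiating twice at the origin should yield the exact values of $\mu$ and $\sigma^2$ that produce the constants in \eqref{t14}--\eqref{t22}.

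\textbf{Two endpoints and a.s.\ versus in probability.}
For $T(0,nu)$, a near-optimal path must cross both the nested loops around $0$ and the nested loops around $nu$; since these two families live at mutual distance $\asymp n$, quasi-multiplicativity of polychromatic arm events makes them asymptotically independent, giving two essentially i.i.d.\ copies of the renewal count. This doubles both the LLN constant and the variance, producing \eqref{t13}, \eqref{t11} and \eqref{t21}. The a.s.\ statement \eqref{t14} upgrades from its in-probability counterpart by exploiting monotonicity of $n\mapsto b_{0,n}$ and interpolating across a polynomial subsequence; for $T(0,nu)$ the moving endpoint destroys monotonicity, and a Borel--Cantelli argument shows that local rearrangements of the configuration near $nu$ produce $\Omega(1)$ fluctuations in $T(0,nu)$ for infinitely many $n$, ruling out a.s.\ convergence.

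\textbf{Main obstacles.}
The hardest step will be passing from convergence in probability (sufficient for the LLN statements \eqref{t12} and \eqref{t11} via uniform integrability) to $L^2$-convergence, which is what pins down the variance constants in \eqref{t22} and \eqref{t21}. This requires matching the discrete loop count to the continuum CLE$_6$ count uniformly in second moments across all dyadic scales, demanding quasi-multiplicativity of polychromatic arm probabilities together with a sufficiently strong coupling between the discrete and continuum loop ensembles. A secondary but concrete challenge is the explicit evaluation of $\mu$ and $\sigma^2$ from the SSW transcendental MGF, where the closed-form constants $2\sqrt3\pi$ and $2/(3\sqrt3\pi)-1/(2\pi^2)$ actually enter.
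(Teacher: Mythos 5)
Your high-level plan is in the same spirit as the paper: reduce to counting nested monochromatic circuits, push to the CLE$_6$ scaling limit via Camia--Newman, and extract the constants from the Schramm--Sheffield--Wilson conformal radius law via renewal theory. The arithmetic you sketch is indeed what the paper does: $\mathbb{E}[B_1]=2\sqrt3\pi$, $\Var[B_1]=16\pi^2-12\sqrt3\pi$, and $\sigma^2/\mu^3 = 2/(3\sqrt3\pi) - 1/(2\pi^2)$. The reduction of $b_{0,n}$ and $T(0,nu)$ to the radial passage time $c_n$ (the paper passes to a disk, not a half-disk, then bounds $|b_{0,n}-c_n|$ and its analogue for $T(0,nu)$ in $L^2$ using exponential tails for arm/circuit counts) is also essentially what you describe, and your two-endpoint doubling argument for $T(0,nu)$ matches the paper's exact decomposition $T(0,z)\approx T(0,\partial B(2^{q-1}))+T(z,\partial B(z,2^{q-1}))$ into genuinely independent pieces.

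However, there is a genuine gap in your treatment of the variance limits \eqref{t22} and \eqref{t21}. You write that the hard step is ``matching the discrete loop count to the continuum CLE$_6$ count uniformly in second moments across all dyadic scales,'' via quasi-multiplicativity and a strong coupling. This does not confront the real obstruction: the dyadic annulus passage times $T_{k,j}=T'(2^{k(j-1)},2^{kj})$ are \emph{not} independent of each other, so $\Var[c_n]$ is not simply $\sum_j\Var[T_{k,j}]$, and neither a coupling of each annulus separately nor quasi-multiplicativity controls the cross-covariances. The paper resolves this with the Kesten--Zhang martingale decomposition: writing $T(0,\mathcal C_{kq})-\mathbb E[T(0,\mathcal C_{kq})]=\sum_j\Delta_{k,j}$ with $\Delta_{k,j}$ martingale increments relative to the filtration generated by successive innermost blue circuits, the cross-terms vanish exactly, so $\Var[T(0,\mathcal C_{kq})]=\sum_j \mathbb E[\Delta_{k,j}^2]$. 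One then proves $|\Var[T_{k,j}]-\mathbb E[\Delta_{k,j}^2]|\le C\sqrt k$ (Lemma~\ref{l8} in the paper), which is an $o(k)$ error since $\mathbb E[\Delta_{k,j}^2]\asymp k$, and only now does the Ces\`aro/scaling-limit argument give the variance constant. Without the martingale identity (or an explicit control of the covariance structure across scales, which your sketch does not supply), the variance constant is not accessible. A secondary imprecision: you invoke ``uniform integrability'' for the expectation limits \eqref{t12} and \eqref{t11}, but the paper actually needs (and has, from Lemma~\ref{l10}) exponential upper tails for $T'(r,R)$ to get convergence of all moments of the annulus times to their CLE$_6$ counterparts; convergence in probability plus a bare UI statement is not enough to evaluate the constant.
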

\begin{remark}
In \cite{20}, we proved the law of large numbers for $b_{0,n}$ and
$T(0,nu)$, but cannot give exact values of the limits.  The proof in
that paper relies on the subadditive ergodic theorem, which is a
nice tool to show the existence of the limit but gives no insight
for the exact value of the limit.
\end{remark}

Kesten and Zhang \cite{10} proved a central limit theorem for
$b_{0,n}$ and $T(0,nu)$.  Combining their result and Theorem
\ref{t1}, we obtain the explicit form of the CLT:

\begin{corollary}\label{c1}
\begin{equation*}
\frac{b_{0,n}-\frac{\log
n}{2\sqrt{3}\pi}}{\sqrt{\left(\frac{2}{3\sqrt{3}\pi}-\frac{1}{2\pi^2}\right)\log
n}}\stackrel{d}\longrightarrow N(0,1)\mbox{ as
$n\rightarrow\infty$}.
\end{equation*}
For each $u\in \mathbb{C}$ with $|u|=1$,
\begin{equation*}
\frac{T(0,nu)-\frac{\log
n}{\sqrt{3}\pi}}{\sqrt{\left(\frac{4}{3\sqrt{3}\pi}-\frac{1}{\pi^2}\right)\log
n}}\stackrel{d}\longrightarrow N(0,1)\mbox{ as
$n\rightarrow\infty$}.
\end{equation*}
\end{corollary}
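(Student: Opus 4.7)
My plan is to derive Corollary \ref{c1} from Kesten--Zhang's CLT in \cite{10} together with Theorem \ref{t1}, via Slutsky's theorem. I read the CLT in \cite{10} as providing the self-centered, self-scaled convergence
\begin{equation*}
\frac{b_{0,n}-E[b_{0,n}]}{\sqrt{\Var[b_{0,n}]}}\stackrel{d}\longrightarrow N(0,1),
\end{equation*}
and analogously for $T(0,nu)$, with the exact order and constants of the mean and variance left as open questions (the very questions recalled in the introduction). Theorem \ref{t1} is then precisely what is needed to promote this to the fully explicit statements of the corollary.

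Set $a=\frac{1}{2\sqrt{3}\pi}$ and $c=\frac{2}{3\sqrt{3}\pi}-\frac{1}{2\pi^{2}}$. I would use the algebraic decomposition
\begin{equation*}
\frac{b_{0,n}-a\log n}{\sqrt{c\log n}}=\frac{\sqrt{\Var[b_{0,n}]}}{\sqrt{c\log n}}\cdot\frac{b_{0,n}-E[b_{0,n}]}{\sqrt{\Var[b_{0,n}]}}+\frac{E[b_{0,n}]-a\log n}{\sqrt{c\log n}}.
\end{equation*}
By (\ref{t22}) the variance ratio tends to $1$, and by Kesten--Zhang the middle factor converges in distribution to $N(0,1)$, so Slutsky's theorem disposes of the product. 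An identical argument handles $T(0,nu)$ with (\ref{t11}) and (\ref{t21}) in place of (\ref{t12}) and (\ref{t22}).

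The main obstacle is the remaining deterministic bias term $(E[b_{0,n}]-a\log n)/\sqrt{c\log n}$. Statement (\ref{t12}) alone only yields $E[b_{0,n}]-a\log n=o(\log n)$, whereas Slutsky requires the sharper $o(\sqrt{\log n})$. I would address this by going inside the proof of (\ref{t12}): there the constant $a$ is produced from the explicit moment generating function of CLE$_{6}$ conformal radii advertised in the abstract, a quantitative and smooth tool (in contrast to the soft subadditive argument of \cite{20}). One should therefore be able to extract an effective error bound, most naturally of the form $E[b_{0,n}]=a\log n+O(1)$, which comfortably implies the vanishing of the bias term. The same remark applies to the mean of $T(0,nu)$, and the corollary follows.
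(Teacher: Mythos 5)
Your approach matches the paper's exactly: invoke the self-normalized CLT of Kesten--Zhang (recorded in the paper as (\ref{e5})--(\ref{e4}), with (\ref{e4}) attributed to Corollary 5.13 of \cite{21}), and upgrade it via Slutsky using Theorem \ref{t1}. You are also correct, and more careful than the paper, in isolating the obstruction. The paper's proof is the single sentence that the corollary ``follows from (\ref{e5}), (\ref{e4}) and Theorem \ref{t1} immediately,'' but this is not immediate: as you observe, (\ref{t12}) only gives $E[b_{0,n}]-\frac{\log n}{2\sqrt{3}\pi}=o(\log n)$, while Slutsky's theorem requires the bias term divided by $\sqrt{\log n}$ to vanish, i.e.\ $E[b_{0,n}]-\frac{\log n}{2\sqrt{3}\pi}=o(\sqrt{\log n})$.

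Your proposed repair --- establish $E[b_{0,n}]=\frac{\log n}{2\sqrt{3}\pi}+O(1)$ --- is the natural one, and the Remark following Proposition \ref{l2} lends it plausibility, recording $\bigl|\mathbb{E}[N(\epsilon)]-\frac{\log(1/\epsilon)}{2\sqrt{3}\pi}\bigr|\leq C$. But that $O(1)$ control is for the \emph{continuum} CLE$_6$ loop count $N(\epsilon)$, not for the discrete expectations $E[c_n]$ or $E[b_{0,n}]$. The paper passes from $N(\epsilon)$ to $E[c_n]$ only through the soft Ces\`aro argument in Proposition \ref{l4}, which, even after optimizing the intermediate scale $k$, leaves an error of order at least $\sqrt{\log n}$ --- precisely borderline. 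To carry out your fix one would need a quantitative rate of convergence in Proposition \ref{l3} (stated in the paper only as weak convergence), or an independent second-order renewal analysis of the discrete circuit structure; neither is supplied. So your diagnosis is accurate and your plan is sound in outline, but the repair is not as routine as ``extracting an effective error bound'' and does not appear to be present in the paper either.
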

\emph{Idea of the Proof.}  Instead of dealing with $b_{0,n}$ and
$T(0,nu)$ directly, we consider $c_n$ which is the first-passage
time from 0 to a circle of radius $n$ centered at 0.  We shall show
analogous limit theorem for $c_n$, then Theorem \ref{t1} follows
from this. Using a color switching trick we obtain that annulus time
has the same distribution as the number of cluster boundary loops
surrounding 0 in the annulus (under a monochromatic boundary
condition).  Camia and Newman's full scaling limit \cite{2} and
moment generating function of the conformal radii for CLE$_6$
\cite{15} allow us to derive limit theorem for the scaling limit of
annulus times.  Then from this we get the limit result for $c_n$ and
$E[c_n]$ easily.  In order to prove the limit result for
$\Var[c_n]$, we will use a martingale approach from \cite{10}.

\section{Notation and preliminaries}\label{s2}
We denote the underlying probability space by
$(\Omega,\mathscr{F},P)$, where $\Omega=\{\mbox{0,1}\}^{\mathbb{V}}$
(or $\{\mbox{blue,yellow}\}^{\mathbb{V}}$), $\mathscr{F}$ is the
cylinder $\sigma$-field and $P$ is the joint distribution of
$\{t(v):v\in \mathbb{V}\}$.  A \textbf{circuit} is a path whose
first and last sites are neighbors. For a circuit $\mathcal {C}$,
define
\begin{equation*}
\overline{\mathcal {C}}:= \mathcal {C}\cup\mbox{ interior sites of
}\mathcal {C}.
\end{equation*}

For $r>0$, let $\mathbb{D}_r$ denote the Euclidean disc of radius
$r$ centered at 0 and $\partial\mathbb{D}_r$ denote the boundary of
$\mathbb{D}_r$.  Write $\mathbb{D}:=\mathbb{D}_1$.  For $v\in
\mathbb{V}$, let $B(v,r)$ denote the set of hexagons of
$\mathbb{\mathbb{H}}$ that are contained in $v+\mathbb{D}_r$.  We
will sometimes see $B(v,r)$ as a union of these closed hexagons.
For $B(v,r)$, denote by $\partial B(v,r)$ its (topological) boundary
and by $\Delta B(v,r)$ its external site boundary (i.e., the set of
hexagons that do not belong to $B(v,r)$ but are adjacent to hexagons
in it). Write $B(r):=B(0,r)$.

Curves are equivalence classes of continuous functions from the unit
interval to $\mathbb{C}$, modulo monotonic reparametrizations.  Let
$\textrm{d}(\cdot,\cdot)$ denote the uniform metric on curves:
\begin{equation*}
\textrm{d}(\gamma_1,\gamma_2):=\inf\sup_{t\in
[0,1]}|\gamma_1(t)-\gamma_2(t)|,
\end{equation*}
where the infimum is taken over all choices of parametrizations of
$\gamma_1$ and $\gamma_2$ from the interval $[0,1]$.  The distance
between two closed sets of curves is defined by the induced
Hausdorff metric as follows:
\begin{equation}\label{e45}
\dist(\mathcal {F},\mathcal {F}'):=\inf\{\epsilon>0:\forall
\gamma\in \mathcal {F},\exists \gamma'\in\mathcal {F}'\mbox{ such
that }\textrm{d}(\gamma,\gamma')\leq\epsilon\mbox{ and vice
versa}\}.
\end{equation}

For critical site percolation on $\mathbb{T}$, we orient a cluster
boundary loop counterclockwise if it has blue sites on its inner
boundary and yellow sites on its outer boundary, otherwise we orient
it clockwise.  We say $B(R)$ has \textbf{monochromatic (blue)
boundary condition} if all the sites in $\Delta B(R)$ are blue.  In
\cite{2}, Camia and Newman showed the following well-known result
(see also Theorem 2 in \cite{32} for the case of a general Jordan
domain):

\begin{theorem}[\cite{2}]\label{t2}
As $\eta\rightarrow 0$, the collection of all cluster boundaries of
critical site percolation on $\eta\mathbb{T}$ in $\mathbb{D}$ with
monochromatic boundary conditions converges in distribution, under
the topology induced by metric (\ref{e45}), to a probability
distribution on collections of continuous nonsimple loops in
$\overline{\mathbb{D}}$.
\end{theorem}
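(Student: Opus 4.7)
The plan is to reduce Theorem~\ref{t2} to two ingredients: tightness of the loop ensemble, and Smirnov's convergence of the discrete percolation exploration to SLE$_6$. Once both are in hand, an iterative exploration scheme recovers all cluster boundary loops simultaneously.

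I would start with tightness of the law of the loop collection in the metric $\dist$ of (\ref{e45}). The standard tool is Aizenman--Burchard's criterion, whose hypothesis reduces to a polynomial upper bound on $k$-arm probabilities in thin annuli; such bounds follow from Russo--Seymour--Welsh theory and are available unconditionally for critical site percolation on $\mathbb{T}$. This yields precompactness of the family of loop ensembles indexed by $\eta$ in the space of closed collections of curves modulo reparametrization.

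To identify the subsequential limit, I would fix a target point $z\in\mathbb{D}$ and exploit the monochromatic (say blue) boundary condition on $\Delta B(1/\eta)$: the outermost cluster boundary loop $\mathcal{L}_0^{\eta}(z)$ around $z$ is the trace left by a radial percolation exploration started near $\partial\mathbb{D}$. Smirnov's theorem, in the form used by Camia and Newman, gives that this exploration converges in distribution to radial SLE$_6$ aimed at $z$, run until it closes a loop around $z$; hence $\mathcal{L}_0^{\eta}(z)$ converges to the corresponding continuum loop. Conditionally on this path, the spatial Markov property of independent Bernoulli percolation implies that inside $\mathcal{L}_0^{\eta}(z)$ one sees fresh critical percolation with monochromatic (yellow) boundary, so the argument iterates and produces the nested loop sequence around $z$. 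Letting $z$ range over a dense countable subset of $\overline{\mathbb{D}}$ and combining with the tightness from the previous step, one recovers every macroscopic cluster boundary loop in the limit and pins down the law uniquely.

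The main obstacle is the SLE$_6$ convergence step: it rests on Smirnov's discrete preharmonic observable (the Cardy--Smirnov function), whose convergence to a conformally invariant continuum analogue forces the Loewner driving function of the exploration to converge to $\sqrt{6}$ times a Brownian motion; the passage from chordal to radial exploration and the proof that the closed loop is measurable with respect to the Loewner trace are further delicate points. A secondary obstacle is ensuring that the iterative construction exhausts all loops in the limit without producing spurious accumulation; this is handled by one-arm stability together with the $k$-arm estimates used for tightness, which guarantee that only finitely many loops of macroscopic size can cross any fixed open subset of $\mathbb{D}$.
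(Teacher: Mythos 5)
The paper does not prove Theorem~\ref{t2}; it is quoted verbatim from Camia and Newman \cite{2} (with \cite{32} for Jordan domains), so there is no ``paper's own proof'' to compare against. Your outline is, however, a reasonable high-level sketch of the Camia--Newman argument, and the two main pillars you identify are the right ones: (a) tightness of the loop ensemble via the Aizenman--Burchard criterion, fed by RSW and $k$-arm estimates, and (b) identification of the limit by coupling iterated percolation explorations to SLE$_6$ and using the spatial Markov property of Bernoulli percolation to restart inside each discovered loop.

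Where your route departs from Camia--Newman, and where a genuine gap would open up if you tried to carry it out as written, is the identification step. You propose a \emph{radial} exploration aimed at a fixed interior point $z$, and invoke Smirnov to get convergence to radial SLE$_6$. Smirnov's theorem (in the form Camia and Newman actually use) gives convergence of the \emph{chordal} exploration path to chordal SLE$_6$; convergence of the radial exploration to radial SLE$_6$ is not an immediate corollary and requires a separate argument. Camia and Newman deliberately avoid this: their construction discovers loops one at a time by running a sequence of chordal explorations between boundary points of the successively revealed subdomains, with a specific bookkeeping scheme that exhausts all macroscopic loops. This is more cumbersome to describe but stays entirely within the scope of the available chordal SLE$_6$ convergence. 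A second point to be careful about: after the first exploration closes a loop around $z$, the discrete inner boundary of that loop is not literally monochromatic (the exploration path has both colors on its two sides); one has to pass to the innermost monochromatic circuit and control the error, which is again handled by arm estimates. You flag both of these as ``delicate points,'' which is fair, but in a complete proof they are not minor: the radial-vs-chordal issue in particular is substantial enough that the original authors structured their whole argument to sidestep it.
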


Camia and Newman call the continuum nonsimple loop process in
Theorem \ref{t2} the full scaling limit of critical site
percolation, which is just the Conformal Loop Ensemble CLE$_6$ in
$\overline{\mathbb{D}}$.  The CLE$_\kappa$ for $8/3<\kappa<8$ is the
canonical conformally invariant measure on countably infinite
collections of noncrossing loops in a simply connected planar
domain, and is conjectured to correspond to the scaling limit of a
wide class of discrete lattice-based models, see \cite{16,17}.  In
the following, CLE$_6$ means CLE$_6$ in $\overline{\mathbb{D}}$.  We
denote by $\mathbb{P}$ the probability measure of CLE$_6$ and by
$\mathbb{E}$ the expectation with respect to $\mathbb{P}$.

Let $\mathcal {L}_k$ be the $k$th largest CLE$_6$ loop that
surrounds 0. Define $U_0:=\mathbb{D}$,  and let $U_k$ be the
connected component of the open set $\mathbb{D}\backslash\mathcal
{L}_k$ that contains 0.  If $D$ is a simply connected planar domain
with $0\in D$, the \textbf{conformal radius} of $D$ viewed from 0 is
defined to be $\CR(D):=|g'(0)|^{-1}$, where $g$ is any conformal map
from $D$ to $\mathbb{D}$ that sends 0 to 0.  For $k\in \mathbb{N}$,
define
$$B_k:=\log\CR(U_{k-1})-\log\CR(U_k).$$
Proposition 1 in \cite{15} says that $\{B_k\}_{k\in\mathbb{N}}$ are
i.i.d. random variables.  Furthermore, Schramm, Sheffield and Wilson
(see (3) and the remark following the statement of Theorem 1 in
\cite{15}) proved the following result for the moment generating
function of $B_k$, which is a key ingredient for the proof of our
main theorem.
\begin{theorem}[\cite{15}]\label{t3}
For $-\infty<\lambda<5/48$ and $k\in\mathbb{N}$,
\begin{equation*}
\mathbb{E}[\exp(\lambda
B_k)]=\frac{1}{2\cos(\pi\sqrt{1/9+4\lambda/3})}.
\end{equation*}
\end{theorem}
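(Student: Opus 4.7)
My plan has three stages: reduce to computing the Laplace transform of $B_1$ using the conformal Markov property of CLE$_6$; realize $B_1$ as the disconnection time of a single radial SLE$_6$; and compute that Laplace transform by constructing an exponential martingale and applying optional stopping. For the reduction, condition on the $k$th-largest loop $\mathcal{L}_k$ around $0$ and on $U_k$, and let $g_k:U_k\to\mathbb{D}$ be the unique conformal map fixing $0$ with $g_k'(0)>0$. The defining conformal invariance of CLE$_6$ gives that the pushforward under $g_k$ of the configuration inside $U_k$ is an independent copy of CLE$_6$ in $\mathbb{D}$; since $\log\CR$ shifts additively by $\log|g_k'(0)|^{-1}$ under this change of coordinate, the $B_k$ are i.i.d.\ copies of $B_1=-\log\CR(U_1)$, and it suffices to compute $\mathbb{E}[\CR(U_1)^{-\lambda}]$.

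For the second stage I would use the locality property of SLE$_6$ together with the Camia--Newman realization (Theorem~\ref{t2}) of CLE$_6$ as the full scaling limit of critical percolation cluster boundaries. Under monochromatic boundary conditions the outermost CLE$_6$ loop around $0$ coincides with the loop formed by a single radial SLE$_6$ in $\mathbb{D}$ targeted at $0$ and stopped at the first time $\tau$ at which its trace disconnects $0$ from $\partial\mathbb{D}$. In the standard radial Loewner parametrization the component $U_t$ containing $0$ has $\log\CR(U_t)=-t$, so that $B_1=\tau$ pathwise.

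For the third stage, after a suitable change of variables the driving data of radial SLE$_6$ can be encoded by a one-dimensional diffusion $X_t$ on a bounded interval $I$, with $\tau$ equal to the first exit time. I would look for a function $\psi$ on $I$ such that $M_t:=\psi(X_t)e^{\lambda t}$ is a bounded martingale on $[0,\tau)$; by It\^o's formula this is equivalent to a second-order linear ODE for $\psi$. A trigonometric substitution converts that ODE into a constant-coefficient equation whose bounded even solution is $\psi(x)=\cos(\alpha\,x)$ with $\alpha=\pi\sqrt{1/9+4\lambda/3}$, and optional stopping at $\tau$, together with the ratio of $\psi$ at the initial and absorbing values, then yields
\begin{equation*}
\mathbb{E}[e^{\lambda\tau}]=\frac{1}{2\cos(\pi\sqrt{1/9+4\lambda/3})}.
\end{equation*}

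The main obstacle is the third stage: identifying the correct one-dimensional diffusion, carrying out the precise change of variables that reduces its generator to a constant-coefficient operator, and then justifying optional stopping at the unbounded time $\tau$. The restriction $\lambda<5/48$ in the statement corresponds exactly to $\alpha<\pi/2$, which keeps $\cos\alpha>0$ and hence $\psi$ strictly positive and bounded on $I$; the value $5/48$ is the one-arm exponent of critical percolation, which controls the exponential tail of $B_1$, so this is the sharp range of finiteness and a tail estimate $\mathbb{P}[B_1>u]\asymp e^{-5u/48}$ provides a useful consistency check on the calculation.
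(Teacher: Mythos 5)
The paper does not prove Theorem~\ref{t3}; it quotes it from Schramm, Sheffield and Wilson \cite{15} (their equation~(3) together with the remark after their Theorem~1, which extends the statement from $\kappa\le 4$ to $\kappa\in(4,8)$), so there is no in-paper proof to compare you against. Judged as a reconstruction of the argument in \cite{15}, your outline has the right shape: the i.i.d.\ reduction to $B_1$ via the conformal Markov property, the passage to a one-dimensional diffusion exit problem coming from the radial Loewner encoding, and the exponential-martingale/ODE computation with a cosine eigenfunction are precisely the ingredients used there, and your endpoint check that $\lambda=5/48$ corresponds to $\alpha=\pi/2$ and matches the one-arm exponent governing the tail of $B_1$ is the right sanity test.

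Two places are currently too thin to stand as a proof. First, stage two is asserted as if it followed directly from SLE$_6$ locality plus Theorem~\ref{t2}, but the identification used in \cite{15} goes through Sheffield's exploration-tree (radial SLE$_\kappa(\kappa-6)$) construction of CLE$_\kappa$; for $\kappa=6$ the weight $\kappa-6$ vanishes, which is why a plain radial SLE$_6$ appears, but you still must argue that the component of $0$ cut out at the disconnection time \emph{is} $U_1$ (equality of domains, not merely equality in law of conformal radii), and that is not an immediate consequence of Camia--Newman. Second, in stage three you should actually exhibit the diffusion (the phase $X_t=\theta_t-W_t$ of the Loewner image of the root, satisfying $dX_t=\cot(X_t/2)\,dt-\sqrt{\kappa}\,dB_t$ on $(0,2\pi)$, with $\tau$ the exit time), carry out the change of variables that turns the generator equation for $\psi(X_t)e^{\lambda t}$ into a constant-coefficient ODE, check the boundary behaviour of $\psi$ at the absorbing endpoints, and justify optional stopping at the unbounded time $\tau$ using the exponential tail you already noted together with the boundedness of $\psi$ on the closed interval when $\lambda<5/48$. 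You have correctly identified stage three as the crux; filling in these two gaps would essentially reproduce the proof in \cite{15}.
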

Theorem \ref{t3} implies the following corollary immediately.
\begin{corollary}\label{c2}
For $k\in \mathbb{N}$,
\begin{align}
&\mathbb{E}[B_k]=2\sqrt{3}\pi,\label{e8}\\
&\Var[B_k]=16\pi^2-12\sqrt{3}\pi.\label{e7}
\end{align}
\end{corollary}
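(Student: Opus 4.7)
The plan is to extract $\mathbb{E}[B_k]$ and $\Var[B_k]$ by differentiating the moment generating function
$$M(\lambda) := \mathbb{E}[\exp(\lambda B_k)] = \frac{1}{2\cos\bigl(\pi\sqrt{1/9 + 4\lambda/3}\bigr)}$$
at $\lambda = 0$. Since the formula of Theorem \ref{t3} is valid on the open interval $(-\infty, 5/48)$, which contains $0$, and the denominator equals $2\cos(\pi/3) = 1$ at $\lambda = 0$ (so $M$ is analytic in a neighborhood of $0$), all moments of $B_k$ exist and are obtained from $\mathbb{E}[B_k] = M'(0)$ and $\mathbb{E}[B_k^2] = M''(0)$, with $\Var[B_k] = M''(0) - M'(0)^2$.

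The execution is pure one-variable calculus. First I would set $f(\lambda) := \pi\sqrt{1/9 + 4\lambda/3}$, compute $f(0) = \pi/3$, $f'(0) = 2\pi$, and $f''(0) = -12\pi$ from the chain rule applied to the square root. Then, writing $M = \tfrac{1}{2}\sec f$ and using $(\sec f)' = \sec f \tan f \cdot f'$, I get
$$M'(0) = \tfrac{1}{2}\sec(\pi/3)\tan(\pi/3)\,f'(0) = \tfrac{1}{2}\cdot 2\cdot\sqrt{3}\cdot 2\pi = 2\sqrt{3}\,\pi,$$
which proves (\ref{e8}). Differentiating once more and using $(\sec f \tan f)' = (\sec f \tan^2 f + \sec^3 f)\,f'$ yields
$$M''(0) = \tfrac{1}{2}\bigl[(\sec(\pi/3)\tan^2(\pi/3) + \sec^3(\pi/3))(f'(0))^2 + \sec(\pi/3)\tan(\pi/3)\,f''(0)\bigr].$$
Plugging in $\sec(\pi/3) = 2$, $\tan(\pi/3) = \sqrt{3}$, $\sec^3(\pi/3) = 8$, $(f'(0))^2 = 4\pi^2$, and $f''(0) = -12\pi$ gives $M''(0) = \tfrac{1}{2}[14\cdot 4\pi^2 + 2\sqrt{3}\cdot(-12\pi)] = 28\pi^2 - 12\sqrt{3}\,\pi$. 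Subtracting $M'(0)^2 = 12\pi^2$ produces $\Var[B_k] = 16\pi^2 - 12\sqrt{3}\,\pi$, which is (\ref{e7}).

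There is no substantive obstacle here; the only point worth flagging is the justification for differentiating under the expectation. This is automatic because $B_k \ge 0$ (as $\CR(U_k) \le \CR(U_{k-1})$) and $M(\lambda) < \infty$ on an open interval containing $0$, which guarantees that $M$ is real-analytic in a neighborhood of the origin and that its Taylor coefficients at $0$ are the moments of $B_k$ divided by factorials.
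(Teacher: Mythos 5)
Your proposal is correct and is exactly the computation the paper has in mind: the paper gives no explicit proof, stating only that Theorem \ref{t3} implies the corollary ``immediately,'' and the intended argument is precisely to differentiate the moment generating function $M(\lambda)=\tfrac{1}{2}\sec\bigl(\pi\sqrt{1/9+4\lambda/3}\bigr)$ at $\lambda=0$. Your derivative evaluations $f(0)=\pi/3$, $f'(0)=2\pi$, $f''(0)=-12\pi$, and hence $M'(0)=2\sqrt{3}\pi$ and $M''(0)=28\pi^2-12\sqrt{3}\pi$, all check out, as does the justification via analyticity of $M$ near $0$.
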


To state next proposition we need more definitions.  For $1\leq
r<R$, let $A(r,R):=B(R)\backslash B(r)$.  Define
\begin{align*}
\rho(r,R)&:=\mbox{the maximal number of disjoint yellow circuits
surrounding 0 in }A(r,R),\\
N(r,R)&:=\mbox{the number of cluster boundary loops surrounding 0 in
$A(r,R)$,}\\
T'(r,R)&:=\inf \{T(\gamma):\gamma \mbox{ is a path connecting
$\partial B(r)$ and $\partial B(R)$}\}.
\end{align*}

$T'(r,R)$ satisfies two combinatorial properties as follows, the
first one is basically the same as (2.39) in \cite{10}, and the
second one can be derived from the first one and a ``color switching
trick".  We note that similar trick has appeared in \cite{24,23,25}.

\begin{proposition}\label{l1}
Suppose $1\leq r<R$.  Then $T'(r,R)$ satisfies the following
properties:
\begin{itemize}
\item $T'(r,R)=\rho(r,R)$.
\item Assume that
$B(R)$ has monochromatic (blue) boundary condition.  Then $T'(r,R)$
has the same distribution as $N(r,R)$.
\end{itemize}
\end{proposition}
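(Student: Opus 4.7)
\emph{First bullet.} I would establish $T'(r,R)=\rho(r,R)$ by the classical max-flow/min-cut duality for site percolation on a planar graph, which is essentially (2.39) of \cite{10}.  The easy direction $T'(r,R)\geq\rho(r,R)$ is topological: given disjoint yellow circuits $C_1,\ldots,C_{\rho(r,R)}$ surrounding $0$ in $A(r,R)$, any path $\gamma$ from $\partial B(r)$ to $\partial B(R)$ must cross each $C_i$ by the Jordan curve theorem, and since the $C_i$'s are vertex-disjoint each crossing contributes a distinct yellow site to $\gamma$, so $T(\gamma)\geq\rho(r,R)$.  For the reverse direction I use the maximality of $\rho(r,R)$: writing $\rho(r,R)=k$, in each of the $k+1$ subregions of $A(r,R)$ separated by $C_1\supset\cdots\supset C_k$ (together with the two boundary components of $A(r,R)$) no further yellow circuit around $0$ exists, and standard planar duality for site percolation then yields a blue path crossing each subregion.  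Splicing these $k+1$ blue paths with single-site crossings of $C_1,\ldots,C_k$ produces a path of passage time exactly $k$, so $T'(r,R)\leq\rho(r,R)$.

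\emph{Second bullet.}  Given the first bullet, it suffices to prove that under the blue boundary condition on $\Delta B(R)$,
\[
\rho(r,R)\stackrel{d}{=}N(r,R).
\]
The plan is to apply the color switching trick in the spirit of \cite{24,23,25}.  Because $\Delta B(R)$ is blue, the cluster boundary loops surrounding $0$ in $A(r,R)$ inherit an alternating orientation from outside inward: the outermost loop has blue on its outer side, the next one has yellow on its outer side, and so on.  I would explore the loops one at a time from $\Delta B(R)$ inward and, upon discovering each loop, conditionally swap the colors of all unexplored hexagons in its interior.  Because the Bernoulli$(1/2)$ product measure on $A(r,R)$ is invariant under the global color flip and the unexplored region is still i.i.d., each swap is measure preserving, so the composite map is a measure-preserving involution on the probability space.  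Under this map the $N(r,R)$ alternating monochromatic rings adjacent to the loops all become yellow, producing a configuration whose maximum number of disjoint yellow circuits surrounding $0$ is $N(r,R)$; combined with the first bullet, this yields $\rho(r,R)\stackrel{d}{=}N(r,R)$.

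\emph{Main obstacle.}  The first bullet is routine.  The technical content is in the color switching: one must set up the loop-by-loop exploration as a measurable stopping-time-style algorithm on the hexagon configuration, verify the spatial Markov property after each loop is exposed so that the conditional swap on the unexplored interior is genuinely measure preserving, and check that the composite map is a bijection that exchanges $\rho(r,R)$ in the switched configuration with $N(r,R)$ in the original.  I expect this combinatorial and measure-theoretic bookkeeping, rather than any new analytic input, to be the main difficulty.
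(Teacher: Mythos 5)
Your outline for the first bullet tracks the paper's proof closely: the easy topological inequality $T'(r,R)\geq\rho(r,R)$ is the same, and your reverse direction (iterating outermost yellow circuits $\mathcal{C}_1\supset\cdots\supset\mathcal{C}_\rho$ and splicing blue connectors between them) is exactly the construction the paper uses, so that part is fine.

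For the second bullet the color-switching idea is the right one and matches the paper in spirit, but there are two concrete gaps in the way you have framed it. First, the claim that the composite map is an \emph{involution} is not justified and almost certainly false. The paper constructs, for each fixed $n$, a deterministic bijection $f_n:\{\rho(r,R)=n\}\to\{N(r,R)=n\}$ and then constructs its inverse \emph{separately}: $f_n$ is built by iterating the outermost \emph{yellow} circuit at every step, while $f_n^{-1}$ is built by iterating circuits of \emph{alternating} color (equivalently, following the cluster-boundary loops, which is what your exploration does). These are genuinely different explorations. Your map $\Phi$ sends $\{N=m\}$ onto $\{\rho=m\}$; for $\Phi$ to be an involution you would need $\Phi$ to also send $\{\rho=n\}$ onto $\{N=n\}$, which would require $N(\Phi(\omega))=\rho(\omega)$ for all $\omega$ — a statement you have not established and which does not follow from the construction. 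You should instead exhibit the inverse map explicitly (the paper's $f_n$ direction) and check that the two compositions are the identity, after which ``bijection on a finite set with uniform measure'' already gives measure preservation with no appeal to spatial Markov or rerandomization.

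Second, the phrase ``swap the colors of all unexplored hexagons in its interior'' needs to be pinned down, because the naive reading (flip everything strictly inside the discovered loop $L_j$) is wrong: it would also flip the yellow ring adjacent to $L_1$ from the inside, which is precisely the outermost yellow circuit the image configuration must keep. The paper avoids this by basing the flip on the nested domains $D_j'$ cut out by the \emph{circuits} $\mathcal{C}_j'$ (the monochromatic rings adjacent to the loops), flipping $D_1'\backslash D_2'$, $D_3'\backslash D_4'$, etc., so that $\mathcal{C}_1',\mathcal{C}_3',\ldots$ are left untouched and $\mathcal{C}_2',\mathcal{C}_4',\ldots$ are recolored yellow. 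Your cumulative ``swap inside each loop'' scheme only reproduces this if ``unexplored'' is interpreted as ``strictly inside the revealed inner ring of the loop,'' and even then one still has to verify, as the paper does, that no \emph{extra} cluster boundary loops appear in the image of $f_n$ and no extra yellow circuits appear in the image of $f_n^{-1}$. These verifications are the real content of the lemma and should not be waved off as bookkeeping.
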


\begin{figure}
\begin{center}
\includegraphics[height=0.4\textwidth]{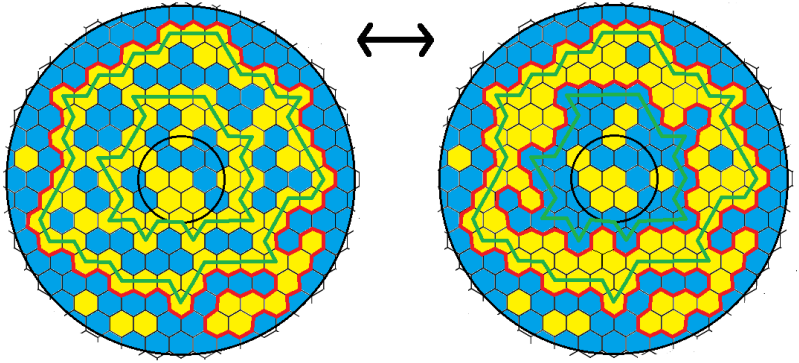}
\caption{A sketch of the color switching trick used in the proof of
Proposition \ref{l1}.}\label{fig1}
\end{center}
\end{figure}

\begin{proof}
The proof of the equation is essentially the same as that of (2.39)
in \cite{10} for critical FPP on $\mathbb{Z}^2$.  For completeness,
we give the proof for our setting. If $\rho(r,R)=0$, then there
exists a blue path connecting $\partial B(R)$ and $\partial B(r)$,
and we get $T'(r,R)=0$.  We assume $\rho(r,R)>0$ in the following.

First we show $T'(r,R)\geq \rho(r,R)$.  This inequality is trivial,
since if there are $\rho$ disjoint yellow circuits surrounding 0 in
$A(r,R)$, any path connecting the two boundary pieces of $A(r,R)$
must intersect these circuits.

Next we show the converse inequality, $T'(r,R)\leq \rho(r,R)$. We
shall construct a path $\gamma$ connecting $\partial B(R)$ and
$\partial B(r)$ such that $T(\gamma)=\rho(r,R)$, which implies
$T'(r,R)\leq \rho(r,R)$ immediately.  Let $D_0:=B(R)$.  Take
$\mathcal {C}_1$ the outermost yellow circuit surrounding 0 in $D_0$
and let $D_1$ be the component of $D_0\backslash \mathcal {C}_1$
that contains 0, then take $\mathcal {C}_2$ the outermost yellow
circuit surrounding 0 in $D_1$ and let $D_2$ be the component of
$D_1\backslash \mathcal {C}_2$ that contains 0, and so on.  The
process stops after $\rho=\rho(r,R)$ steps.  It is easy to see that
$\mathcal {C}_k\subset A(r,R)$ for $1\leq k\leq \rho$ and there
exists no yellow circuit surrounding 0 in $D_{\rho}\backslash B(r)$.
Thus we can take a blue path $\gamma_{\rho}$ connecting $\Delta
B(r)$ and a site $v_{\rho}$ in $\mathcal {C}_{\rho}$ (note that
$\gamma_{\rho}$ may be empty if $\mathcal {C}_{\rho}$ intersects
$\Delta B(r)$, similar case may occur below), then a blue path
$\gamma_{\rho-1}$ connecting $v_{\rho}$ and a site $v_{\rho-1}$ in
$\mathcal {C}_{\rho-1}$ since $\mathcal {C}_{\rho}$ is the outermost
yellow circuit in $D_{\rho-1}$, and so on.  The process stops after
$\rho+1$ steps, and $\gamma_{0}$ is a blue path connecting $v_1\in
\mathcal {C}_1$ and the inner site boundary of $B(R)$.  Let
$\gamma=\gamma_0v_1\ldots\gamma_{\rho-1}v_{\rho}\gamma_{\rho}$.
Clearly $\gamma$ connects $\partial B(R)$ and $\partial B(r)$ and
$T(\gamma)=\rho(r,R)$.

We now turn to the proof of the second property.  See Fig.
\ref{fig1} for an illustration of the following argument.  Since
$B(R)$ has monochromatic (blue) boundary condition, it is easy to
see that $N(r,R)$ equals the maximal number of disjoint circuits
$\mathcal {C}_1',\mathcal {C}_2',\ldots$ surrounding 0 in $A(r,R)$
with alternating colors (yellow, blue, yellow, blue, $\ldots$) and
$\overline{\mathcal {C}_1'}\supset\overline{\mathcal
{C}_2'}\supset\ldots$.  For any fixed $n\in \mathbb{N}\cup\{0\}$, we
shall construct a bijection between the sets
$\{\omega:\rho(r,R)=n\}$ and $\{\omega':N(r,R)=n\}$.  Given a
configuration $\omega\in\{\rho(r,R)=n\}$, we construct a sequence of
yellow circuits $\mathcal {C}_1,\ldots,\mathcal {C}_n$ from outside
to inside and a sequence of connected domains $D_0,\ldots,D_n$ as in
the proof of the first property.  If $n$ is odd, we switch the
colors of the sites in $D_1\backslash D_2,\ldots,D_{n-2}\backslash
D_{n-1},D_n$; if $n$ is even, we switch the colors of the sites in
$D_1\backslash D_2,\ldots,D_{n-1}\backslash D_n.$  Denote this
transformation by $f_n$.  Then $f_n(\omega)\in\{N(r,R)=n\}$.  This
can be seen as follows: When $n$ is odd, after the transformation
$f_n$, the color of $\mathcal {C}_1,\mathcal {C}_3,\ldots,\mathcal
{C}_n$ is invariant and the color of $\mathcal {C}_2,\mathcal
{C}_4,\ldots,\mathcal {C}_{n-1}$ is switched to blue.  Then we can
find a cluster boundary loop surrounding 0 in each $D_j\backslash
D_{j+1},0\leq j\leq n-1$.  Furthermore, none of these domains has
two such loops, otherwise it would produce a yellow circuit
surrounding 0 between two successive circuits $\mathcal {C}_j$ and
$\mathcal {C}_{j+1}$ or outside $\mathcal {C}_1$ in $D_0$ in the
original configuration, which contradicts our construction.  There
does not exist cluster boundary loop surrounding 0 in $D_n\backslash
B(r)$ since $\omega\in\{\rho(r,R)=n\}$.  Hence,
$f_n(\omega)\in\{N(r,R)=n\}$.  The argument is similar when $n$ is
even.

Given a configuration $\omega'\in\{N(r,R)=n\}$, similarly as above,
we let $D_0':=B(R)$, take $\mathcal {C}_1'$ the outermost yellow
circuit surrounding 0 in $D_0'$ and let $D_1'$ be the component of
$D_0'\backslash \mathcal {C}_1'$ that contains 0, then take
$\mathcal {C}_2'$ the outermost blue circuit surrounding 0 in $D_1'$
and let $D_2'$ be the component of $D_1'\backslash \mathcal {C}_2'$
that contains 0, and so on.  The process stops after $n$ steps.  If
$n$ is odd, we switch the colors of the sites in $D_1'\backslash
D_2',\ldots,D_{n-2}'\backslash D_{n-1}',D_n'$; if $n$ is even, we
switch the colors of the sites in $D_1'\backslash
D_2',\ldots,D_{n-1}'\backslash D_n'.$  Note that $\omega'$ is
transformed to $\omega\in\{\rho(r,R)=n\}$ and this transformation is
just $f_n^{-1}$.   This can be seen as follows: When $n$ is odd,
after the transformation, the color of $\mathcal {C}_1',\mathcal
{C}_3',\ldots,\mathcal {C}_n'$ is invariant and the color of
$\mathcal {C}_2',\mathcal {C}_4',\ldots,\mathcal {C}_{n-1}'$ is
switched to yellow, and $\mathcal {C}_j'$ is the outermost yellow
circuit surrounding 0 in $D_{j-1}',1\leq j\leq n$.  Furthermore,
$\rho(r,R)=n$, otherwise it would produce more than $n$ cluster
boundary loops surrounding 0 in $A(r,R)$ before the transformation.
Therefore, this transformation is just $f_n^{-1}$ by the
construction and the definition of $f_n$.  The argument is similar
when $n$ is even.  Then the bijection $f_n$ between
$\{\omega:\rho(r,R)=n\}$ and $\{\omega':N(r,R)=n\}$ is constructed
for each $n\in \mathbb{N}\cup\{0\}$, which completes the proof since
we use the uniform measure.
\end{proof}

The following lemma gives upper large deviation bound for $T'(r,R)$:
\begin{lemma}[Corollary 2.3 in \cite{20}]\label{l10}
There exist constants $C_1,C_2>0$ and $K>1$, such that for all
$1\leq r< R$ and $x\geq K\log_2(R/r)$,
\begin{equation*}
P[T'(r,R)\geq x]\leq C_1\exp(-C_2x).
\end{equation*}
\end{lemma}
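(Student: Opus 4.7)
The plan is to combine the combinatorial identity from Proposition~\ref{l1} with standard Russo-Seymour-Welsh (RSW) circuit estimates for critical site percolation on $\mathbb{T}$. Since the first bullet of Proposition~\ref{l1} gives $T'(r,R) = \rho(r,R)$ deterministically, I only need to prove $P[\rho(r,R)\geq x] \leq C_1 \exp(-C_2 x)$ for $x \geq K\log_2(R/r)$, where $\rho(r,R)$ counts disjoint yellow circuits surrounding $0$ in $A(r,R)$.

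First, I would subdivide $A(r,R)$ into $N := \lfloor \log_2(R/r) \rfloor$ dyadic sub-annuli $A_i := A(2^i r, 2^{i+1} r)$ for $0 \leq i \leq N-1$. Since these use disjoint sets of sites, events depending on different $A_i$ are mutually independent. Let $Y_i$ be the number of disjoint yellow circuits surrounding $0$ entirely contained in $A_i$. A standard iteration of RSW and the BK inequality inside the aspect-ratio-$2$ annulus $A_i$ (the event that $Y_i\geq k$ is the $k$-fold disjoint occurrence of the increasing event ``yellow circuit surrounding $0$ in $A_i$'', whose probability is bounded away from $1$ by RSW) yields a uniform geometric tail $P[Y_i \geq k] \leq C \theta^k$ with $\theta \in (0,1)$ and $C>0$ independent of $i$. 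Equivalently, $\sup_i E[\exp(\lambda Y_i)] \leq M(\lambda) < \infty$ for small $\lambda>0$, and by independence
\begin{equation*}
E\Bigl[\exp\Bigl(\lambda \sum_{i=0}^{N-1} Y_i\Bigr)\Bigr] \leq M(\lambda)^N.
\end{equation*}
To pass from $\sum_i Y_i$ back to $\rho(r,R)$, I would show $\rho(r,R) \leq \sum_i Y_i + Z$, where $Z$ counts disjoint yellow circuits surrounding $0$ that span two or more consecutive $A_i$. I plan to control $Z$ by running a second dyadic decomposition shifted by a factor of $\sqrt{2}$, so that any yellow circuit of aspect ratio at most $2$ lies inside some sub-annulus of at least one of the two decompositions, while circuits of larger aspect ratio, being disjoint and nested, must occupy essentially distinct dyadic scales and are therefore at most $O(N)$ in number.

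Combining these ingredients gives $E[\exp(\lambda \rho(r,R))] \leq \alpha(\lambda)^N$ with $\alpha(\lambda)\to 1$ as $\lambda\to 0^+$, so a Chernoff bound yields
\begin{equation*}
P[\rho(r,R)\geq x] \leq \exp\bigl(N\log\alpha(\lambda)-\lambda x\bigr).
\end{equation*}
Choosing $\lambda>0$ small and $K > \log\alpha(\lambda)/\lambda$ so that the negative term dominates whenever $x\geq K\log_2(R/r)\geq KN$, one obtains the stated bound with $C_2=\lambda/2$ (say) after inflating $C_1$ to absorb subleading constants and the boundary case $N=0$. The main obstacle I anticipate is the second step: rigorously bounding the contribution $Z$ of circuits spanning multiple sub-annuli without spoiling the independence that drives the exponential moment estimate. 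The shifted-decomposition trick handles this cleanly, but the combinatorial bookkeeping --- ensuring every relevant circuit is charged to exactly one sub-annulus of one of the two decompositions --- will need to be executed carefully.
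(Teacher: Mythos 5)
The paper does not actually prove this lemma; it is imported as Corollary~2.3 of~\cite{20}, so there is no internal proof to check against line by line. Your overall skeleton --- reduce to $\rho(r,R)$ via Proposition~\ref{l1}, decompose $A(r,R)$ into dyadic sub-annuli, obtain a uniform exponential moment for the circuit count $Y_i$ inside each sub-annulus from RSW and BK, and then run a Chernoff bound with $K$ chosen large enough that the entropy term $N\log\alpha(\lambda)$ is dominated by $\lambda x$ when $x\geq KN$ --- is the natural one, and those steps are sound.

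The gap is in the treatment of the error term $Z$, the circuits straddling more than one sub-annulus. You assert that circuits of aspect ratio larger than $2$, ``being disjoint and nested, must occupy essentially distinct dyadic scales and are therefore at most $O(N)$ in number.'' That is false as a deterministic statement. Disjoint nested circuits surrounding $0$ do have weakly decreasing inner radii and weakly decreasing outer radii (by a radial ray argument), but nothing forces consecutive radii apart by a multiplicative factor: on the lattice one can build order $r$ disjoint nested circuits, each of aspect ratio $>2$, all with inner radius in $[r,2r]$ and outer radius in $[4r,8r]$ (e.g.\ concentric near radius $2r$ with a common long bump dipping in toward $r$ and out toward $4r$). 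So $Z$ can be of order $r\gg N$, and the deterministic inequality $\rho(r,R)\leq\sum_i Y_i+O(N)$ fails. These straddling circuits must be controlled probabilistically --- in the paper's language, via the random variables $S_x$ of Lemma~\ref{l14} --- but the $S_x$ at different scales are not independent, so one is back to the very dependence problem the shifted decomposition was meant to sidestep; a union bound over the $\sim N$ scales only gives decay of order $\exp(-Cx/N)$, not $\exp(-Cx)$. A secondary, fixable quibble: shifting by $\sqrt{2}$ does not make every aspect-ratio-$\leq 2$ circuit fit inside a single sub-annulus of one of your two families (a log-length-$1$ interval is generically contained in no interval of two interleaved log-length-$1$ families); you would need wider sub-annuli or more shifts. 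But the essential missing ingredient is a probabilistic control of $Z$ that still produces a tail exponential in $x$ rather than in $x/N$, and as written the proposal does not supply it.
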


In this paper, $C,C_1,C_2,\ldots$ denote positive finite constants
that may change from line to line according to the context.

\section{Proofs of the main results}

\subsection{Scaling limits of annulus times}\label{s31}

For $0<\epsilon<1$, denote by $N(\epsilon)$ the number of CLE$_6$
loops surrounding 0 in $\overline{\mathbb{D}}\backslash
\overline{\mathbb{D}}_{\epsilon}$.  Camia and Newman's full scaling
limit allows us to derive a scaling limit of $T'(\rho r,\rho R)$ as
$\rho\rightarrow\infty$:

\begin{proposition}\label{l3}
Suppose $1\leq r<R,\rho>0$ and $k\in\mathbb{N}$.  Assume that
$B(\rho R)$ has monochromatic (blue) boundary condition.  As
$\rho\rightarrow\infty$, we have
\begin{align}
&T'(\rho r,\rho R)\stackrel{d}\longrightarrow N(r/R),\label{e16}\\
&E[(T'(\rho r,\rho R))^k]\rightarrow
\mathbb{E}[(N(r/R))^k].\label{e17}
\end{align}
\end{proposition}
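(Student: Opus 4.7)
The plan is to pipe everything through Proposition \ref{l1} so that the passage time becomes a pure counting functional of the discrete loop ensemble, and then to apply Camia--Newman (Theorem \ref{t2}) and scale invariance to pass to CLE$_6$. Uniform integrability for the moment statement will come from the exponential tail bound of Lemma \ref{l10}.

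First I would invoke Proposition \ref{l1}: under the monochromatic (blue) boundary condition on $B(\rho R)$, the random variable $T'(\rho r,\rho R)$ has the same distribution as $N(\rho r,\rho R)$, the number of cluster boundary loops of the discrete percolation configuration that surround $0$ and lie in the annulus $A(\rho r,\rho R)$. So the problem reduces to proving
\begin{equation*}
N(\rho r,\rho R)\stackrel{d}\longrightarrow N(r/R)\quad\text{as }\rho\to\infty,
\end{equation*}
and analogous moment convergence.

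Next I would rescale by $1/(\rho R)$. The lattice becomes $(\rho R)^{-1}\mathbb{T}$, the outer domain $B(\rho R)$ becomes (a discrete approximation of) $\overline{\mathbb{D}}$ with monochromatic outer boundary, and the annulus $A(\rho r,\rho R)$ becomes the continuous annulus $\overline{\mathbb{D}}\setminus\overline{\mathbb{D}}_{r/R}$. By Theorem \ref{t2} the collection of all rescaled cluster boundaries converges in distribution, in the metric \eqref{e45}, to CLE$_6$ in $\overline{\mathbb{D}}$. By the very definition of $N(r/R)$ and the scale invariance of CLE$_6$ in $\overline{\mathbb{D}}$, the CLE$_6$ analogue of $N(\rho r,\rho R)$ is exactly $N(r/R)$, so it suffices to show that the ``count of loops surrounding $0$ in $\overline{\mathbb{D}}\setminus\overline{\mathbb{D}}_{r/R}$'' is a $\mathbb{P}$-almost sure continuity point of the loop ensemble, and then invoke the continuous mapping theorem.

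The main obstacle is this a.s.\ continuity step. The issue is that in principle a CLE$_6$ loop surrounding $0$ could just touch $\partial\mathbb{D}_{r/R}$ (or $\partial\mathbb{D}$) tangentially, and then a small perturbation of the loop could either include or exclude it from the annulus, causing the count to jump. The standard way out is to note: (i) $N(r/R)$ is a.s.\ finite, since $\log \CR(U_k)=\log \CR(U_0)-\sum_{j=1}^k B_j\to-\infty$ almost surely by the i.i.d.\ property of the $B_j$'s and Theorem \ref{t3}; (ii) for each fixed circle $\partial\mathbb{D}_\epsilon$ with $0<\epsilon<1$, $\mathbb{P}$-a.s.\ no CLE$_6$ loop surrounding $0$ has minimum modulus exactly $\epsilon$ and no CLE$_6$ loop has maximum modulus exactly $1$ without already being the outermost loop (which is a.s.\ the unit circle in the monochromatic setup); this follows from the absolutely continuous law of $\log\CR(U_k)$ inherited from Theorem \ref{t3}. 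Points (i)--(ii) together imply that with probability one, the loop count is locally constant under small Hausdorff perturbations, giving the needed continuity and hence \eqref{e16}.

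Finally, for the moment convergence \eqref{e17}, I would use Lemma \ref{l10}: since the ratio $R/r$ is fixed, there is a constant $K'$ with $P[T'(\rho r,\rho R)\geq x]\leq C_1\exp(-C_2 x)$ for all $x\geq K'$, uniformly in $\rho$. Hence $\sup_\rho E[(T'(\rho r,\rho R))^{k+1}]<\infty$ for every $k$, which supplies uniform integrability of $(T'(\rho r,\rho R))^k$, and combined with \eqref{e16} yields \eqref{e17}. (Finiteness of $\mathbb{E}[(N(r/R))^k]$ follows a posteriori from the same tail bound via Fatou, or directly from Theorem \ref{t3}.)
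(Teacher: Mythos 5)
Your reduction via Proposition \ref{l1}, the appeal to Theorem \ref{t2} and scale invariance of CLE$_6$, and the uniform-integrability argument from Lemma \ref{l10} for \eqref{e17} all match the paper. But the central continuity step has a genuine gap: the map $\mathcal{F}\mapsto\#\{\text{loops of }\mathcal{F}\text{ surrounding }0\text{ in the annulus}\}$ is \emph{not} continuous in the Hausdorff-type metric \eqref{e45} at any finite, well-separated configuration, so your points (i)--(ii) plus the continuous mapping theorem do not suffice. Indeed, if $\mathcal{F}=\{\gamma_1,\dots,\gamma_m\}$ and $\gamma'$ is a loop within distance $\epsilon$ of some $\gamma_i$, then $\dist(\mathcal{F}\cup\{\gamma'\},\mathcal{F})\leq\epsilon$ while the count jumps from $m$ to $m+1$. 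Your (ii) (no CLE loop tangent to $\partial\mathbb{D}_{r/R}$) rules out loops entering or leaving the annulus under perturbation, but nothing you say prevents two distinct \emph{discrete} loops from both being matched to a single CLE loop, which would make the discrete count strictly exceed the CLE count even when $\dist(\mathcal{F}_\rho,\mathcal{F})$ is tiny.

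The paper closes exactly this hole with the event $\mathcal{B}_{\epsilon,\rho}:=\{\exists\,\mathcal{L}_1,\mathcal{L}_2\in\mathcal{F}_\rho:\mathrm{d}(\mathcal{L}_1,\mathcal{L}_2)<\epsilon\}$, bounding its probability using the polychromatic plane $6$-arm exponent (for a near-collision away from $\partial\mathbb{D}$) and the half-plane $3$-arm exponent (for a near-collision close to $\partial\mathbb{D}$), so that on the Skorokhod coupling the discrete loops are, with high probability, $\epsilon$-separated in $\mathrm{d}$; together with the paper's $\mathcal{A}_{\epsilon,\rho}$ estimate (a $3$-arm bound showing discrete loops do not straddle $\partial\mathbb{D}_{r/R}$) this yields convergence in probability of the counts. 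Note also that the paper's two estimates are carried out on the \emph{discrete} side, which is what the coupling actually requires; your argument about absolute continuity of $\log\CR(U_k)$ lives entirely on the CLE side and, even if it could be made precise, would not by itself control the discrete configuration. To repair your proof you should add a discrete separation estimate of the $\mathcal{B}_{\epsilon,\rho}$ type, and then abandon the continuous mapping theorem in favour of a direct coupling argument as in the paper.
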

\begin{proof}
Let
\begin{align*}
\mathcal {F}_{\rho}&:=\mbox{the collection of cluster boundary loops
surrounding 0 in $A(\rho r,\rho R)$}\\
&~~~~~\mbox{scaled by $1/(\rho R)$},\\
\mathcal {F}&:=\mbox{the collection of CLE$_6$ loops surrounding 0
in $\overline{\mathbb{D}}\backslash \overline{\mathbb{D}}_{r/R}$}.
\end{align*}

Define event
\begin{align*}
\mathcal {A}_{\epsilon,\rho}:=\{\exists \mathcal {L}\in \mathcal
{F}_{\rho}\mbox{ such that }\mathcal {L}\cap
(\mathbb{D}_{(1+\epsilon)r}\backslash
\mathbb{D}_{(1-\epsilon)r})\neq\emptyset\}.
\end{align*}
Assume that $\mathcal {A}_{\epsilon,\rho}$ holds and $\rho$ is large
enough (depending on $\epsilon$).  Then we have a polychromatic
3-arm event from a ball of radius $3\epsilon$ centered at a point
$z\in\partial \mathbb{D}_{(1-\epsilon)r}$ to a distance of unit
order in $\overline{\mathbb{D}}\backslash
\mathbb{D}_{(1-\epsilon)r}$.  For a fixed $z\in\partial
\mathbb{D}_{(1-\epsilon)r}$, the corresponding 3-arm event happens
with probability at most $O(\epsilon^2)$ (see e.g. Lemma 6.8 in
\cite{18}).  From this one easily obtains $P[\mathcal
{A}_{\epsilon,\rho}]\leq O(\epsilon)$.  Then Theorem \ref{t2}
implies that $\mathcal {F}_{\rho}$ converges in distribution to
$\mathcal {F}$ as $\rho\rightarrow\infty$. Because of the choice of
topology, we can find coupled versions of $\mathcal {F}_{\rho}$ and
$\mathcal {F}$ on the same probability space such that
$\dist(\mathcal {F}_{\rho},\mathcal {F})\rightarrow 0$ a.s. as
$\rho\rightarrow \infty$ (see e.g. Theorem 6.7 of \cite{Bill},
similar couplings were used in \cite{2}).

Now let us show that for large $\rho$, with high probability the
distance $\textrm{d}(\cdot,\cdot)$ between the loops in $\mathcal
{F}_{\rho}$ is not very small.  Define event
\begin{equation*}
\mathcal {B}_{\epsilon,\rho}:=\{\exists \mathcal {L}_1,\mathcal
{L}_2\in \mathcal {F}_{\rho}\mbox{ such that }\textrm{d}(\mathcal
{L}_1,\mathcal {L}_2)<\epsilon\}.
\end{equation*}
Assume that $\mathcal {B}_{\epsilon,\rho}$ holds and $\rho$ is large
enough (depending on $\epsilon$).  Suppose $\mathcal {L}_2$ is in
the interior of $\mathcal {L}_1$.  Note that the polychromatic
half-plane 3-arm exponent $\beta_3$ is 2 and the polychromatic plane
6-arm exponent $\alpha_6$ is larger that 2 (see e.g. \cite{14}). Let
$0<\alpha<1-2/\alpha_6$ be a fixed number.  If $\mathcal {L}_1$
comes $\epsilon^{\alpha}/2$-close to $\partial \mathbb{D}$, then we
have a half-plane 3-arm event from a ball of radius
$2\epsilon^{\alpha}$ centered on a point in $\partial \mathbb{D}$ to
a distance of unit order in $\overline{\mathbb{D}}$; otherwise, we
have a polychromatic plane 6-arm event from radius $\epsilon$ to
$\epsilon^{\alpha}/2$ in $\overline{\mathbb{D}}$.  So, the event
$\mathcal {B}_{\epsilon,\rho}$ happens with probability at most
$O(\epsilon^{-\alpha}\epsilon^{\alpha\beta_3}+\epsilon^{-2}\epsilon^{(1-\alpha)(\alpha_6+o(1))})=O(\epsilon^{\alpha}+\epsilon^{(1-\alpha)(\alpha_6+o(1))-2})$.
This implies that in the above coupling, the number of loops in
$\mathcal {F}_{\rho}$ converges in probability to the number of
loops in $\mathcal {F}$ as $\rho\rightarrow\infty$.  Then we get
$N(\rho r,\rho R)\stackrel{d}\longrightarrow N(r/R)$ immediately.
From Proposition \ref{l1}, we obtain (\ref{e16}).  Lemma \ref{l10}
and (\ref{e16}) imply (\ref{e17}).
\end{proof}

The key ingredient Theorem \ref{t3} enables us to obtain the
following limit theorem for $N(\epsilon)$:
\begin{proposition}\label{l2}
\begin{align}
&\lim_{\epsilon\rightarrow 0}\frac{\mathbb{E}[N(\epsilon)]}{\log
(1/\epsilon)}=\frac{1}{2\sqrt{3}\pi},~~\lim_{\epsilon\rightarrow
0}\frac{N(\epsilon)}{\log(
1/\epsilon)}=\frac{1}{2\sqrt{3}\pi}~~a.s.,\label{e9}\\
&\lim_{\epsilon\rightarrow 0}\frac{\Var[N(\epsilon)]}{\log(
1/\epsilon)}=\frac{2}{3\sqrt{3}\pi}-\frac{1}{2\pi^2}.\label{e10}
\end{align}
\end{proposition}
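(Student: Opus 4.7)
The plan is to realize $N(\epsilon)$ as a renewal counting function for the i.i.d. sequence $\{B_k\}$ from Theorem~\ref{t3} and to read off all three limits from classical renewal theory, with Corollary~\ref{c2} supplying the mean and variance of $B_1$. Set $S_n:=B_1+\cdots+B_n=\log(1/\CR(U_n))$ (using $\CR(U_0)=\CR(\mathbb{D})=1$) and $\tau(t):=\max\{n\geq 0:S_n<t\}$. A CLE$_6$ loop $\mathcal{L}_k$ surrounding $0$ lies in $\overline{\mathbb{D}}\setminus\overline{\mathbb{D}}_\epsilon$ iff $\dist(0,\mathcal{L}_k)>\epsilon$, so by nestedness of the $U_k$ we have $N(\epsilon)=\max\{k:\dist(0,\partial U_k)>\epsilon\}$. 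Koebe's $1/4$ theorem ($\CR(U_k)/4\leq\dist(0,\partial U_k)\leq\CR(U_k)$) therefore yields the sandwich
\begin{equation*}
\tau\bigl(\log(1/\epsilon)-\log 4\bigr)\;\leq\;N(\epsilon)\;\leq\;\tau\bigl(\log(1/\epsilon)\bigr).
\end{equation*}

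Writing $\mu:=\mathbb{E}[B_1]=2\sqrt{3}\pi$ from \eqref{e8} and $\sigma^2:=\Var[B_1]=16\pi^2-12\sqrt{3}\pi$ from \eqref{e7}, the strong law of large numbers for $S_n$ gives $\tau(t)/t\to 1/\mu$ a.s., and the sandwich instantly produces the a.s.~limit in \eqref{e9}. The elementary renewal theorem gives $\mathbb{E}[\tau(t)]/t\to 1/\mu$, supplying the mean statement. For the variance I would appeal to the renewal-theoretic identity $\Var[\tau(t)]/t\to\sigma^2/\mu^3$; the arithmetic check
\begin{equation*}
\frac{\sigma^2}{\mu^3}=\frac{16\pi^2-12\sqrt{3}\pi}{(2\sqrt{3}\pi)^3}=\frac{2}{3\sqrt{3}\pi}-\frac{1}{2\pi^2}
\end{equation*}
reproduces the constant in \eqref{e10} exactly.

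The main obstacle is passing from the distributional renewal CLT to genuine convergence of second moments, as \eqref{e10} demands. I would establish uniform integrability of $\{(\tau(t)-t/\mu)^2/t\}_{t\geq 1}$ by a Chernoff bound: since the MGF formula in Theorem~\ref{t3} is valid on $(-\infty,5/48)$, in particular $\mathbb{E}[e^{-\lambda B_1}]<\infty$ for every $\lambda>0$, so the standard argument yields $P[\tau(t)\geq n]=P[S_n<t]\leq e^{-c_\delta t}$ uniformly for $n\geq(1+\delta)t/\mu$, and hence all polynomial moments of $\tau(t)/t$ are bounded. Transferring the mean and variance limits from $\tau$ to $N(\epsilon)$ through the Koebe sandwich then only requires that the increment $\tau(\log(1/\epsilon))-\tau(\log(1/\epsilon)-\log 4)$ have uniformly bounded $L^2$ norm, which is immediate since it counts renewals in an interval of fixed length and the overshoot of $S_n$ above any level has exponential tails by the same MGF input. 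Combining these ingredients produces \eqref{e9} and \eqref{e10} with the stated constants.
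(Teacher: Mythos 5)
Your proposal is correct and follows essentially the same route as the paper: realize $N(\epsilon)$ as a renewal counting function for the i.i.d. log-conformal-radius increments $B_k$, sandwich $N(\epsilon)$ between renewal counts at times $\log(1/\epsilon)-\log 4$ and $\log(1/\epsilon)$ via Koebe, then invoke the elementary renewal theorem, the renewal SLLN, and the variance analogue $\Var[\tau(t)]/t\to\sigma^2/\mu^3$. The only cosmetic difference is that the paper simply cites Smith (1958) for the variance renewal theorem, whereas you sketch re-deriving it via Chernoff tail bounds and uniform integrability; the transfer from $\tau$ to $N(\epsilon)$ via boundedness of renewals in a fixed-length window matches the paper's argument.
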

\begin{proof}
From Proposition 1 in \cite{15}, we know that
$\{B_k\}_{k\in\mathbb{N}}$ are i.i.d. random variables.  It is clear
that $S_n:=\sum_{k=1}^{n}B_k=-\log\CR(U_n),n\geq 1$ is a renewal
process.  We let $N_t:=\inf\{n:S_n>t\}$ denote the number of
renewals in $[0,t]$.  Schwarz Lemma and the Koebe $1/4$ Theorem (see
e.g. Lemma 2.1 and Theorem 3.17 in \cite{12}) imply that
\begin{equation*}
\frac{\CR(U_n)}{4}\leq\dist(0,\mathcal {L}_n)\leq \CR(U_n).
\end{equation*}
Hence, $\CR(U_{N(\epsilon)})\geq\dist(0,\mathcal
{L}_{N(\epsilon)})>\epsilon$ and $\CR(U_{N(\epsilon)+1})\leq
4\dist(0,\mathcal {L}_{N(\epsilon)+1})\leq 4\epsilon$.  Then we have
\begin{equation}\label{e11}
N_{\log(1/(5\epsilon))}-1\leq N(\epsilon)<N_{\log(1/\epsilon)}.
\end{equation}
Applying the elementary renewal theorem and law of large numbers for
renewal processes (see e.g. (4.1) and (4.2) in Section 3.4 in
\cite{27}), we know
\begin{equation}
\lim_{t\rightarrow\infty}\frac{\mathbb{E}[N_t]}{t}\rightarrow\frac{1}{\mathbb{E}[B_1]},~~
\lim_{t\rightarrow\infty}\frac{N_t}{t}\rightarrow\frac{1}{\mathbb{E}[B_1]}
~~a.s.\label{e12}
\end{equation}
Combining (\ref{e8}), (\ref{e11}) and (\ref{e12}), we obtain
(\ref{e9}).

There is a variance analogue of the elementary renewal theorem, see
e.g. (1.6) in \cite{29}.  From this we know
\begin{equation}\label{e13}
\lim_{t\rightarrow\infty}\frac{\Var [N_t]}{t}=\frac{\Var
[B_1]}{(\mathbb{E}[B_1])^3}.
\end{equation}
The triangle inequality for the norm
$\|\cdot\|_2=\sqrt{\mathbb{E}[|\cdot|^2]}$ and (\ref{e11}) give
\begin{align*}
\left|\sqrt{\Var[N(\epsilon)]}-\sqrt{\Var[N_{\log(1/\epsilon)}]}\right|&\leq\sqrt{\Var[N(\epsilon)-N_{\log(1/\epsilon)}]}\\
&\leq\sqrt{\mathbb{E}[(N_{\log(1/\epsilon)}-N_{\log(1/(5\epsilon))}+1)^2]}.
\end{align*}
It is easy to see that the number of renewals in each time interval
of length $\log 5$ can be uniformly dominated by a positive random
variable with an exponential tail.  Therefore, there exists a
constant $C>0$ such that for all $0<\epsilon<1$,
\begin{equation*}
\left|\sqrt{\Var[N(\epsilon)]}-\sqrt{\Var[N_{\log(1/\epsilon)}]}\right|\leq
C.
\end{equation*}
Combining this inequality, (\ref{e13}) and Corollary \ref{c2}, we
get (\ref{e10}).
\end{proof}
\begin{remark}
In fact, if one uses second-order approximations to the
expectation-time and variance-time curves for renewal processes (see
e.g. (1.4) and the equation just above Section 1.5 in \cite{29}) in
the above proof, one obtains that there exists a constant $C>0$ such
that for all $0<\epsilon<1$,
\begin{equation*}
\left|\mathbb{E}[N(\epsilon)]-\frac{\log(1/\epsilon)}{2\sqrt{3}\pi}\right|\leq
C,~~\left|\Var[N(\epsilon)]-\frac{\log(
1/\epsilon)}{\frac{2}{3\sqrt{3}\pi}-\frac{1}{2\pi^2}}\right|\leq C.
\end{equation*}
We note that the first inequality above has been proved in \cite{28}
(see (3.8) in \cite{28}).
\end{remark}

\subsection{Limit results for $c_n$ and $E[c_n]$}\label{s32}
Recall that $c_n$ is the passage time from 0 to $\partial B(n)$. In
order to prove our limit results for $c_n$ and $E[c_n]$, we need
some lemmas.  In \cite{20}, we proved the following result.
\begin{lemma}[Lemma 2.5 in \cite{20}]\label{l6}
\begin{equation*}
\lim_{n\rightarrow\infty}\frac{c_n}{E[c_n]}=1~~a.s.
\end{equation*}
\end{lemma}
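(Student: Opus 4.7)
The plan is to combine a second-moment concentration estimate with a subsequence Borel--Cantelli argument and a monotonicity interpolation.

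First I would establish the moment bounds
\begin{equation*}
c_1 \log n \leq E[c_n] \leq c_2 \log n, \qquad \Var[c_n] \leq c_3 \log n
\end{equation*}
for positive constants $c_1, c_2, c_3$. The upper bound on $E[c_n]$ follows from integrating the exponential tail of Lemma \ref{l10} (applied to $T'(1,n)$, which differs from $c_n$ by at most a bounded term). For the lower bound, partition $A(1,n)$ into the $\Theta(\log n)$ disjoint dyadic annuli $A(2^j, 2^{j+1})$; a standard RSW estimate gives a uniform lower bound $p_0 > 0$ on the probability that $A(2^j, 2^{j+1})$ contains a yellow circuit around $0$, and these events are independent across disjoint annuli. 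Every such circuit must be crossed by any path from $0$ to $\partial B(n)$, so $E[c_n] \geq p_0 \lfloor \log_2 n \rfloor$.

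For the variance I would use the martingale decomposition $c_n - E[c_n] = \sum_k \Delta_k$ along the dyadic filtration $\mathcal{F}_k := \sigma(t(v) : v \in B(2^k))$, with $\Delta_k := E[c_n \mid \mathcal{F}_k] - E[c_n \mid \mathcal{F}_{k-1}]$, giving $\Var[c_n] = \sum_k E[\Delta_k^2]$ by orthogonality. A coupling argument shows that if two configurations agree outside the annular shell $A(2^{k-1}, 2^k)$, then $|c_n(\omega) - c_n(\omega')|$ is dominated by the crossing time of a slightly enlarged annulus of bounded conformal modulus. By Lemma \ref{l10} this crossing time has uniformly bounded second moment, so $E[\Delta_k^2] = O(1)$ uniformly in $k$, and summing over the $O(\log n)$ scales gives the claim.

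Chebyshev now yields $P(|c_n - E[c_n]| > \epsilon E[c_n]) \leq C/(\epsilon^2 \log n)$. Along the subsequence $n_k := \lfloor e^{k^2} \rfloor$ this is $O(1/k^2)$, so Borel--Cantelli gives $c_{n_k}/E[c_{n_k}] \to 1$ a.s. To extend to the full sequence, the monotonicity of $c_n$ and $E[c_n]$ in $n$ gives, for $n_k \leq n \leq n_{k+1}$, the sandwich
\begin{equation*}
\frac{c_{n_k}}{E[c_{n_k}]} \cdot \frac{E[c_{n_k}]}{E[c_{n_{k+1}}]} \leq \frac{c_n}{E[c_n]} \leq \frac{c_{n_{k+1}}}{E[c_{n_{k+1}}]} \cdot \frac{E[c_{n_{k+1}}]}{E[c_{n_k}]},
\end{equation*}
and both outer ratios tend to $1$ because $E[c_{n_{k+1}}] - E[c_{n_k}] \leq E[T'(n_k, n_{k+1})] = O(k)$ by Lemma \ref{l10}, while $E[c_{n_k}] \geq c_1 k^2$.

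The main obstacle is the variance bound: one must argue via a coupling that resampling the configuration in a single dyadic shell can change $c_n$ by at most a crossing time whose second moment is uniformly bounded in the scale. Without this scale-independent $L^2$ control the martingale sum would blow up and the subsequence step would fail; the exponential tail of Lemma \ref{l10} is precisely what makes this control possible.
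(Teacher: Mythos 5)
Your overall strategy---a second-moment concentration bound, Borel--Cantelli along the sparse subsequence $n_k=\lfloor e^{k^2}\rfloor$, and monotone interpolation---is sound and is very much in the spirit of the argument in \cite{20}, which also reduces the almost sure statement to a concentration estimate for $c_n$ together with the two-sided bound $E[c_n]\asymp\log n$. The lower bound via disjoint yellow circuits in dyadic annuli, the upper bound from Lemma \ref{l10}, and the interpolation step (using that $E[c_{n_{k+1}}]-E[c_{n_k}]=O(k)$ while $E[c_{n_k}]\gtrsim k^2$) are all correct.

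The weak point, as you yourself note, is the variance bound, and the phrase ``a coupling argument shows that if two configurations agree outside $A(2^{k-1},2^k)$ then $|c_n(\omega)-c_n(\omega')|$ is dominated by the crossing time of a slightly enlarged annulus'' does not work as literally stated. If you take the geodesic for $\omega'$ and re-evaluate it in $\omega$, the change can be as large as the number of sites of the geodesic inside the shell, which is not controlled by $T'$. The actual mechanism is the Kesten--Zhang circuit device: one locates blue circuits $\mathcal{C}^{\mathrm{in}}$ in (say) $A(2^{k-3},2^{k-2})$ and $\mathcal{C}^{\mathrm{out}}$ in $A(2^{k+1},2^{k+2})$, both lying outside the resampled shell and hence present in both configurations; then $T(0,\mathcal{C}^{\mathrm{in}})$ and $T(\mathcal{C}^{\mathrm{out}},\partial B(n))$ are $\omega$-$\omega'$ invariant, and sandwiching $c_n$ between $T(0,\mathcal{C}^{\mathrm{in}})+T(\mathcal{C}^{\mathrm{out}},\partial B(n))$ and $T(0,\mathcal{C}^{\mathrm{in}})+T(\mathcal{C}^{\mathrm{in}},\mathcal{C}^{\mathrm{out}})+T(\mathcal{C}^{\mathrm{out}},\partial B(n))$ gives $|c_n(\omega)-c_n(\omega')|\le T'(2^{k-2},2^{k+1})(\omega)+T'(2^{k-2},2^{k+1})(\omega')$ plus small circuit-location corrections, on the event that such circuits exist. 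The exponentially small probability that they don't has to be absorbed separately. This is exactly why the paper's own martingale (Section 3.3) is indexed by the \emph{random} filtration $\mathscr{F}_j$ generated by the innermost blue circuits $\mathcal{C}_j$ rather than by the deterministic balls $B(2^k)$: the random filtration builds the decoupling circuits in from the start. Your deterministic dyadic filtration can be made to work, but only by invoking the same circuit construction inside the coupling step, so it is not really an alternative to it. Once you make that step precise, the rest of the proposal goes through.

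One minor point on the interpolation: $E[c_{n_{k+1}}]\le E[c_{n_k}]+E[T'(n_k,n_{k+1})]$ also needs a small connecting correction (again via a circuit near scale $n_k$), but this contributes only an $O(1)$ term and does not affect the argument.
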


Lemma \ref{l10} together with RSW and FKG (see e.g. \cite{5}), gives
the following lemma.  Note that (\ref{e20}) was first proved in
\cite{30}.
\begin{lemma}\label{l9}
There exist $C_1,C_2>0$ such that for all $1\leq r\leq R/2$,
\begin{equation}\label{e22}
C_1\log(R/r)\leq E[T'(r,R)]\leq C_2\log(R/r).
\end{equation}
In particular, for all $n\geq 2$,
\begin{equation}\label{e20}
C_1\log n\leq E[c_n]\leq C_2\log n.
\end{equation}
\end{lemma}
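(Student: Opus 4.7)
The plan is to prove (\ref{e22}) first and then deduce (\ref{e20}) by comparing $c_n$ with $T'(1,n)$. The three ingredients are the tail estimate of Lemma~\ref{l10}, the deterministic identity $T'(r,R)=\rho(r,R)$ from the first bullet of Proposition~\ref{l1}, and the standard RSW/FKG circuit estimate for critical site percolation on $\mathbb{T}$.

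For the upper bound in (\ref{e22}), I would integrate the tail bound from Lemma~\ref{l10}: writing
\[E[T'(r,R)]=\int_0^\infty P[T'(r,R)\geq x]\,dx\]
and splitting the integral at $x_0:=K\log_2(R/r)$, the contribution from $[0,x_0]$ is at most $x_0=O(\log(R/r))$, while Lemma~\ref{l10} bounds the contribution from $[x_0,\infty)$ by $C_1\int_{x_0}^\infty e^{-C_2 x}\,dx=O(1)$, yielding the upper bound.

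For the lower bound in (\ref{e22}), I would use $E[T'(r,R)]=E[\rho(r,R)]$ and partition $A(r,R)$ into dyadic subannuli $A_i:=A(r\cdot 2^i,r\cdot 2^{i+1})$ for $0\leq i<N:=\lfloor\log_2(R/r)\rfloor$. Let $X_i$ be the indicator that $A_i$ contains a yellow circuit surrounding $0$. Standard RSW combined with FKG gives a constant $p_0>0$, independent of $r$ and $i$, with $P[X_i=1]\geq p_0$. The $X_i$ are independent, being determined by disjoint sets of hexagons, and yellow circuits in distinct $A_i$ are automatically disjoint yellow circuits of $A(r,R)$ surrounding $0$, so $\rho(r,R)\geq\sum_{i=0}^{N-1}X_i$. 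Taking expectations, $E[T'(r,R)]\geq p_0 N\geq C_1\log(R/r)$ whenever $R\geq 2r$.

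Finally, for the transfer to (\ref{e20}), any path from $0$ to $\partial B(n)$ must cross $\partial B(1)$, so $c_n\geq T'(1,n)$; conversely, concatenating a deterministic path inside $B(1)$ of uniformly bounded length with a minimizer for $T'(1,n)$ gives $c_n\leq W+T'(1,n)$ with $W\leq\sum_{v\in B(1)}t(v)$ of bounded expectation. Combined with (\ref{e22}) applied with $r=1$, $R=n$, this yields (\ref{e20}). No step poses a real obstacle: the only nontrivial input beyond Lemma~\ref{l10} and Proposition~\ref{l1} is the RSW/FKG circuit estimate, which is a standard tool for critical site percolation on $\mathbb{T}$.
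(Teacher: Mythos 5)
Your proof is correct and fleshes out exactly what the paper leaves implicit: the paper gives only a one-line indication ("Lemma \ref{l10} together with RSW and FKG gives the following lemma") and cites Chayes--Chayes--Durrett for (\ref{e20}), while you supply the standard integration of the tail bound for the upper bound and the disjoint-dyadic-annulus circuit count (via $T'(r,R)=\rho(r,R)$ and RSW/FKG) for the lower bound, plus the routine comparison of $c_n$ with $T'(1,n)$. This is the same approach, just written out.
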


For $x\geq 0$, denote by $S_x$ the maximum number of disjoint yellow
circuits that surround 0 and intersect with $\partial B(2^x)$. Using
RSW, FKG and BK inequality, it is easy to obtain the following
result.

\begin{lemma}\label{l14}
There exists a constant $C>0$, such that for all $x,t\geq 0$,
\begin{equation}\label{e38}
P[S_x\geq t]\leq \exp(-Ct).
\end{equation}
Hence, there is a constant $C_0>0$ (independent of $x$), such that
\begin{equation}\label{e31}
E[S_x]\leq C_0.
\end{equation}
\end{lemma}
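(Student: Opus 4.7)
The plan is to prove the exponential tail bound (\ref{e38}) by combining the BK inequality with a dual blue blocking configuration built from RSW and FKG, and then to derive (\ref{e31}) by summing the tail.

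First I would observe that $\{S_x\ge t\}$ is a disjoint occurrence of $t$ copies of the event
\[Y_x:=\{\exists\text{ a yellow circuit around }0\text{ intersecting }\partial B(2^x)\}.\]
Indeed, if $S_x\ge t$ with witnessing circuits $\mathcal{C}_1,\ldots,\mathcal{C}_t$, the sets of yellow sites $W_j:=\mathcal{C}_j$ are pairwise disjoint and each $W_j$ by itself witnesses $Y_x$. By the BK inequality, $P[S_x\ge t]\le (P[Y_x])^t$.

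The main step is then to show $P[Y_x]\le 1-c_0$ uniformly in $x$, via a ``blue theta'' blocking configuration. By RSW, each of the three events (a) a blue circuit around $0$ in $A(2^{x-1},2^x)$, (b) a blue circuit around $0$ in $A(2^x,2^{x+1})$, and (c) a blue crossing of $A(2^{x-1},2^{x+1})$ from $\partial B(2^{x-1})$ to $\partial B(2^{x+1})$, has probability bounded below uniformly in $x$. All three are increasing in the blue configuration, so by FKG their intersection $E$ has $P[E]\ge c_0>0$ uniformly in $x$. On $E$, the two blue circuits together with the blue crossing cut the middle sub-annulus (between the two circles, where $\partial B(2^x)$ lies) into a simply connected region, while the two blue circles additionally block any yellow connection from the middle sub-annulus to $B(2^{x-1})$ or to the exterior of $B(2^{x+1})$. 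A yellow circuit touching $\partial B(2^x)$ would then be confined to this simply connected region, which does not contain $0$, forcing its winding number about $0$ to vanish and contradicting that it surrounds $0$. Hence $E\subset Y_x^c$ and $P[Y_x]\le 1-c_0$.

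Combining the two steps gives $P[S_x\ge t]\le (1-c_0)^t=\exp(-Ct)$ with $C=-\log(1-c_0)>0$, which is (\ref{e38}). Then (\ref{e31}) follows from
\[E[S_x]=\sum_{t\ge 1}P[S_x\ge t]\le\sum_{t\ge 1}e^{-Ct},\]
a finite constant independent of $x$. The hardest part will be the topological verification in the second paragraph: at the lattice level one must justify (i) that the two blue circuits completely block yellow paths from crossing into $B(2^{x-1})$ or out past $B(2^{x+1})$, and (ii) that the middle sub-annulus with the blue crossing removed is simply connected. These are standard in planar critical percolation but require attention to the hexagonal lattice structure.
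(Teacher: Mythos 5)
Your argument is correct and uses exactly the tools the paper invokes (RSW, FKG, BK); the paper leaves Lemma \ref{l14} without a written proof, remarking only that it follows easily from these, and your write-up fills that in along the intended lines. The BK step $P[S_x\ge t]\le (P[Y_x])^t$, the RSW/FKG ``blue theta'' blocking construction giving $P[Y_x]\le 1-c_0$ uniformly in $x$, and the summation $E[S_x]=\sum_{t\ge1}P[S_x\ge t]$ are all sound.
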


Now we are ready to prove the limit result for $c_n$ and $E[c_n]$:
\begin{proposition}\label{l4}
\begin{align}
&\lim_{n\rightarrow\infty}\frac{c_n}{\log
n}=\frac{1}{2\sqrt{3}\pi}~~a.s.,\label{e15}\\
&\lim_{n\rightarrow\infty}\frac{E[c_n]}{\log
n}=\frac{1}{2\sqrt{3}\pi}.\label{e14}
\end{align}
\end{proposition}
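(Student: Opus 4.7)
The plan is to establish the expectation limit (\ref{e14}) first and deduce the almost sure statement (\ref{e15}) from Lemma~\ref{l6} at the end. I begin with the elementary bound $T'(1,n) \leq c_n \leq T'(1,n) + C$, valid for a deterministic constant $C$ (the worst passage time of a detour from $0$ to $\partial B(1)$), so that $|E[c_n]-E[T'(1,n)]|=O(1)$ and it suffices to prove $E[T'(1,n)]/\log n \to 1/(2\sqrt{3}\pi)$. Using the first property of Proposition~\ref{l1}, $T'(r,R)=\rho(r,R)$, I observe the two-sided scale decomposition
\[
T'(r,r')+T'(r',R)\;\leq\;T'(r,R)\;\leq\;T'(r,r')+T'(r',R)+S_{\log_2 r'},
\]
obtained by splitting any maximal family of disjoint yellow circuits surrounding $0$ in $A(r,R)$ into those contained in $A(r,r')$, those in $A(r',R)$, and those crossing $\partial B(r')$. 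Fixing $K \in \mathbb{N}$ and iterating with $M_j := 2^{jK}$ gives
\[
\sum_{j=0}^{k-1} T'(M_j,M_{j+1})\;\leq\;T'(1,M_k)\;\leq\;\sum_{j=0}^{k-1} T'(M_j,M_{j+1})+\sum_{j=1}^{k-1} S_{jK}.
\]

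Now I apply Proposition~\ref{l3} with $r=1$, $R=2^K$, $\rho=M_j$, which yields $E[T'(M_j,M_{j+1})]\to \mathbb{E}[N(2^{-K})]$ as $j\to\infty$ for each fixed $K$. A Cesaro averaging then gives $(1/k)\sum_{j=0}^{k-1}E[T'(M_j,M_{j+1})]\to \mathbb{E}[N(2^{-K})]$. Combining with the uniform bound $E[S_{jK}]\leq C_0$ from Lemma~\ref{l14} and dividing the sandwich by $\log M_k=kK\log 2$,
\[
\frac{\mathbb{E}[N(2^{-K})]}{K\log 2}\;\leq\;\liminf_{k\to\infty}\frac{E[T'(1,M_k)]}{\log M_k}\;\leq\;\limsup_{k\to\infty}\frac{E[T'(1,M_k)]}{\log M_k}\;\leq\;\frac{\mathbb{E}[N(2^{-K})]+C_0}{K\log 2}.
\]
Letting $K\to\infty$, Proposition~\ref{l2} forces both bounds to $1/(2\sqrt{3}\pi)$ and the residual $C_0/(K\log 2)$ vanishes; monotonicity of $T'(1,\cdot)$ in the outer radius extends the convergence from $n=M_k$ to all $n$, proving (\ref{e14}). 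Then (\ref{e15}) follows by writing $c_n/\log n=(c_n/E[c_n])(E[c_n]/\log n)\to 1\cdot 1/(2\sqrt{3}\pi)$ almost surely, via Lemma~\ref{l6}.

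The main obstacle will be the interplay between the two asymptotic regimes: the scaling-limit convergence per scale carries a persistent $O(1)$ additive error from the $S$-term, which only washes out in the normalization $C_0/(K\log 2)$ after $K$ is taken large, so the limits must be taken in the order $j\to\infty$ before $K\to\infty$. A minor technical point worth checking is that Proposition~\ref{l3} is stated under the monochromatic boundary condition on $B(\rho R)$, yet its conclusion applies here unconditionally because $T'(r,R)=\rho(r,R)$ is measurable in the configuration on $A(r,R)$ only and is unaffected by the conditioning on $\Delta B(R)$.
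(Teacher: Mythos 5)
Your proof is correct and takes essentially the same route as the paper: a dyadic-scale decomposition into $T'(2^{jK},2^{(j+1)K})$ terms with crossing-circuit corrections $S_{jK}$, convergence of each scale's expectation via Proposition~\ref{l3}, Ces\`aro averaging, the iterated limit $k\to\infty$ then $K\to\infty$, and Lemma~\ref{l6} for the almost sure statement. The only cosmetic differences are that you first trade $c_n$ for $T'(1,n)$ with an $O(1)$ error and extend from $n=2^{kK}$ to general $n$ by monotonicity, whereas the paper sandwiches $c_n$ directly in (\ref{e19}); and your observation that the monochromatic boundary condition in Proposition~\ref{l3} is harmless because $T'(\rho r,\rho R)=\rho(\rho r,\rho R)$ depends only on the configuration in $A(\rho r,\rho R)$ is precisely the implicit justification the paper relies on.
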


\begin{proof}
Note that Lemma \ref{l6} and (\ref{e14}) imply (\ref{e15}),  so it
suffices to show (\ref{e14}).  We write
$T_{k,j}:=T'(2^{k(j-1)},2^{kj})$.   By Proposition \ref{l1}, for
$k\in\mathbb{N}$ and $n\geq 2^k$,  it is clear that
\begin{equation}\label{e19}
-1+\sum_{j=1}^{\lfloor\log_{2^k} n\rfloor} T_{k,j}\leq c_n\leq
1+\rho(1,n)\leq 1+\sum_{j=1}^{\lfloor\log_{2^k} n\rfloor+1}
(T_{k,j}+S_{kj}).
\end{equation}
Combining this with (\ref{e22}) and (\ref{e31}), we obtain that
there exists $C_1,C_2>0$ such that
\begin{equation*}
-1+\sum_{j=1}^{\lfloor\log_{2^k} n\rfloor} E[T_{k,j}]\leq E[c_n]\leq
C_1\log_{2^k} n+C_2k+\sum_{j=1}^{\lfloor\log_{2^k} n\rfloor}
E[T_{k,j}].
\end{equation*}
This and (\ref{e20}) imply that for each $0<\epsilon<1$, there
exists $k_0(\epsilon)>0$, such that for each $k\geq k_0$, for $n$
sufficiently large (depending on $k$),
\begin{equation}\label{e18}
1-\epsilon\leq\frac{\sum_{j=1}^{\lfloor\log_{2^k} n\rfloor}
E[T_{k,j}]}{E[c_n]}\leq 1+\epsilon.
\end{equation}
By the convergence of the Ces\`{a}ro mean and (\ref{e17}), we obtain
\begin{equation}\label{e1}
\lim_{m\rightarrow\infty}\frac{\sum_{j=1}^m
E[T_{k,j}]}{m}=E[N(1/2^k)].
\end{equation}
Then (\ref{e14}) follows from (\ref{e9}), (\ref{e18}) and
(\ref{e1}):
\begin{equation*}
\lim_{n\rightarrow\infty}\frac{E[c_n]}{\log
n}=\lim_{k\rightarrow\infty}\lim_{n\rightarrow\infty}\frac{\sum_{j=1}^{\lfloor\log_{2^k}
n\rfloor} E[T_{k,j}]}{\log
n}=\lim_{k\rightarrow\infty}\frac{E[N(1/2^k)]}{\log
(2^k)}=\frac{1}{2\sqrt{3}\pi}.
\end{equation*}
\end{proof}

\subsection{Limit result for $\Var [c_n]$}\label{s33}
The proof of the result relating to variance turns out to be more
involved than that to expectation.  We shall use the martingale
approach in \cite{10}, with a modification by introducing an
intermediate scale. Let us mention that an analogous martingale
method was used in \cite{33} to prove a CLT for winding angles of
arms in 2D critical percolation.  We start with some definitions.

For $j\in \mathbb{N}\cup\{0\}$, define annulus
\begin{equation*}
A(j):=A(2^j,2^{j+1}).
\end{equation*}
Furthermore, define
\begin{align*}
&m(j):=\inf\{k\geq j: A(k)\mbox{ contains a blue circuit
surrounding 0}\},\\
&\mathcal {C}_j:=\mbox{the innermost blue circuit surrounding 0
in }A(m(j)),\\
&\mathscr{F}_j:=\sigma\mbox{-field generated by }\{t(v):v\in
\overline{\mathcal {C}}_j\}.
\end{align*}
For all $j\geq 1$, denote by $\mathcal {C}_{-j}$ the origin and by
$\mathscr{F}_{-j}$ the trivial $\sigma$-field.  For
$k,q\in\mathbb{N}$, write
\begin{equation*}
T(0,\mathcal {C}_{kq})-E[T(0,\mathcal
{C}_{kq})]=\sum_{j=0}^q\left(E[T(0,\mathcal
{C}_{kq})|\mathscr{F}_{kj}]-E[T(0,\mathcal
{C}_{kq})|\mathscr{F}_{k(j-1)}]\right):=\sum_{j=0}^q\Delta_{k,j}.
\end{equation*}
Then $\{\Delta_{k,j}\}_{0\leq j\leq q}$ is an
$\mathscr{F}_{kj}$-martingale increment sequence.  Hence,
\begin{equation}\label{e42}
\Var[T(0,\mathcal {C}_{kq})]=\sum_{j=0}^qE[\Delta_{k,j}^2].
\end{equation}
We will use this sum to estimate $\Var[c_n]$.

Let $(\Omega',\mathscr{F}',P')$ be a copy of
$(\Omega,\mathscr{F},P)$.  Denote by $E'$ the expectation with
respect to $P'$, and by $\omega'$ a sample point in $\Omega'$.  Let
$T(\cdot,\cdot)(\omega),m(j,\omega)$ and $\mathcal {C}_j(\omega)$
denote the quantities defined before, but with explicit dependence
on $\omega$. Define $l(j,\omega,\omega'):=m(m(j,\omega)+1,\omega')$.
We need the following results, which were proved in \cite{10}.  Note
that (\ref{e27}) follows from RSW and FKG, and the proof of
(\ref{e28}) is essentially the same as that of Lemma 2 in \cite{10}.
\begin{lemma}[\cite{10}]\label{l7}
(i) There exists $C>0$, such that for all $j,t\in\mathbb{N}$, we
have
\begin{equation}\label{e27}
P[m(j)\geq j+t]\leq \exp(-Ct).
\end{equation}
(ii) For $j\geq 0$ and $k\geq 1$, $\Delta_{k,j}$ does not depend on
$q$.  Furthermore,
\begin{align}
\Delta_{k,j}(\omega)=&T(\mathcal {C}_{k(j-1)}(\omega),\mathcal
{C}_{kj}(\omega))(\omega)+E'[T(\mathcal {C}_{kj}(\omega),\mathcal
{C}_{l(kj,\omega,\omega')}(\omega'))(\omega')]\nonumber\\
&-E'[T(\mathcal {C}_{k(j-1)}(\omega),\mathcal
{C}_{l(kj,\omega,\omega')}(\omega'))(\omega')].\label{e28}
\end{align}
\end{lemma}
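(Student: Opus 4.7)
The plan is to prove (i) by Russo--Seymour--Welsh (RSW) estimates combined with independence of disjoint annuli, and (ii) by the domain Markov property at blue circuits together with a telescoping of passage times through those circuits.

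For part (i), RSW theory (see e.g.~\cite{5}) gives a constant $p_0>0$, uniform in $k$, such that $P[A(k)\mbox{ contains a blue circuit surrounding }0]\geq p_0$. Since the annuli $A(j),A(j+1),\ldots,A(j+t-1)$ have mutually disjoint site sets, the corresponding circuit events are independent, and therefore $P[m(j)\geq j+t]\leq(1-p_0)^t$, which yields (\ref{e27}) with $C=-\log(1-p_0)$.

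For part (ii), the crucial tool is that $\overline{\mathcal{C}_{kj}}$ is a stopping domain: $\mathcal{C}_{kj}$ can be identified by exploring outward from $A(kj)$ until some annulus contains a blue circuit and then locating the innermost one, so conditionally on $\mathscr{F}_{kj}$ the sites outside $\overline{\mathcal{C}_{kj}}$ are i.i.d.~Bernoulli$(1/2)$ and independent of $\mathscr{F}_{kj}$. This conditional law is realised by an independent copy $\omega'$, using $\omega$ on $\overline{\mathcal{C}_{kj}(\omega)}$ and $\omega'$ outside. Since $\mathcal{C}_{kj}$ is blue and carries no passage cost, any path from $0$ to a site outside $\overline{\mathcal{C}_{kj}}$ must cross $\mathcal{C}_{kj}$ for free, giving $T(0,\mathcal{C}_{kq})=T(0,\mathcal{C}_{kj})+T(\mathcal{C}_{kj},\mathcal{C}_{kq})$ and similarly at level $k(j-1)$. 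Taking conditional expectations and subtracting yields
\begin{equation*}
\Delta_{k,j}=T(\mathcal{C}_{k(j-1)},\mathcal{C}_{kj})+E[T(\mathcal{C}_{kj},\mathcal{C}_{kq})|\mathscr{F}_{kj}]-E[T(\mathcal{C}_{k(j-1)},\mathcal{C}_{kq})|\mathscr{F}_{k(j-1)}].
\end{equation*}
Next, the plan is to apply the Markov property to each conditional expectation and decompose the outer passage time through the auxiliary blue circuit $\mathcal{C}_{l(kj,\omega,\omega')}(\omega')$---the innermost blue circuit of the fresh $\omega'$ just outside $\mathcal{C}_{kj}(\omega)$---so that each expectation splits into a short-range part (from the base circuit to $\mathcal{C}_l$) plus a far-range part (from $\mathcal{C}_l$ out to $\mathcal{C}_{kq}$). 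The far-range parts carry all the $q$-dependence; by arranging, via the tower property $\mathscr{F}_{k(j-1)}\subset\mathscr{F}_{kj}$ and a shared copy $\omega'$, that these parts become literally the same random variable, they cancel in the subtraction, leaving exactly the right-hand side of (\ref{e28}), which is manifestly independent of $q$.

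The main obstacle will be this coupling at the end: to obtain pointwise cancellation (not merely equality of distributions) one must realise both conditional expectations on a common probability space with the same fresh outer environment. This is delicate because the two stopping domains $\overline{\mathcal{C}_{k(j-1)}}\subset\overline{\mathcal{C}_{kj}}$ are nested but distinct, so a priori each carries its own independent copy of the outside. The standard remedy is to condition the $\mathscr{F}_{k(j-1)}$-expectation further on $\mathscr{F}_{kj}$ and apply the Markov property to the annular region between the two circuits; after this extra conditioning $\mathcal{C}_{kj}$ is a common random set in both expectations, the outer contribution from $\mathcal{C}_l(\omega')$ out to $\mathcal{C}_{kq}$ becomes a common deterministic function of $\mathcal{C}_l(\omega')$, and it drops out in the subtraction, leaving the claimed formula.
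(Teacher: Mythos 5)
The paper does not actually prove this lemma: it cites Kesten--Zhang \cite{10}, remarking only that (\ref{e27}) follows from RSW and FKG and that (\ref{e28}) is obtained as in Lemma~2 of \cite{10}. So your proposal must be judged on its own. Part~(i) is fine: RSW (together with FKG to glue crossings into a circuit) gives a uniform lower bound $p_0>0$ on $P[A(k)\mbox{ contains a blue circuit around }0]$, the annuli $A(j),\dots,A(j+t-1)$ are disjoint so the events are independent, and $(1-p_0)^t$ is the desired bound.

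For part~(ii) you have assembled the right ingredients (zero-cost blue circuits make passage times telescope; the stopping-domain Markov property freshens the configuration outside $\overline{\mathcal C_{kj}}$ resp.\ $\overline{\mathcal C_{k(j-1)}}$; the auxiliary circuit $\mathcal C_{l(kj,\omega,\omega')}(\omega')$ is the correct place to split off the far-range contribution), but the ``remedy'' you propose at the end is misdirected. If you condition the $\mathscr F_{k(j-1)}$-expectation further on $\mathscr F_{kj}$, the outer $E[\,\cdot\mid\mathscr F_{k(j-1)}]$ then averages out $\mathcal C_{kj}(\omega)$ and $m(kj,\omega)$, so the far-range term $E'[T(\mathcal C_l,\mathcal C_{kq})(\omega')]$ reappears under an additional $\mathscr F_{k(j-1)}$-average and does not cancel pointwise against the corresponding term in $E[T(0,\mathcal C_{kq})\mid\mathscr F_{kj}]$; you would not recover (\ref{e28}). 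In fact no extra conditioning is needed. The observation that closes the argument is simpler: $\mathcal C_{l(kj,\omega,\omega')}(\omega')$ and the far-range passage time $T(\mathcal C_l(\omega'),\mathcal C_{kq}(\omega'))(\omega')$ depend on $\omega'$ only through its restriction to the complement of $\overline{\mathcal C_{kj}(\omega)}$, a region that is fresh under \emph{both} the $\mathscr F_{kj}$- and the $\mathscr F_{k(j-1)}$-Markov decompositions once one uses the same full-lattice i.i.d.\ copy $\omega'$ for both. Writing
$E[T(0,\mathcal C_{kq})\mid\mathscr F_{kj}](\omega)=T(0,\mathcal C_{kj}(\omega))(\omega)+E'[T(\mathcal C_{kj}(\omega),\mathcal C_l(\omega'))(\omega')]+E'[T(\mathcal C_l(\omega'),\mathcal C_{kq}(\omega'))(\omega')]$
and
$E[T(0,\mathcal C_{kq})\mid\mathscr F_{k(j-1)}](\omega)=T(0,\mathcal C_{k(j-1)}(\omega))(\omega)+E'[T(\mathcal C_{k(j-1)}(\omega),\mathcal C_l(\omega'))(\omega')]+E'[T(\mathcal C_l(\omega'),\mathcal C_{kq}(\omega'))(\omega')]$,
the last summands are the same function of $(\omega,\omega')$ and cancel in the difference, giving (\ref{e28}) and hence the $q$-independence directly. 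Replacing your final paragraph with this direct cancellation would make the proof correct.
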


Next we bound the variances of $\Delta_{k,j}$ and $T(0,\mathcal
{C}_j)$.
\begin{lemma}\label{l11}
There exist $C_1,C_2>0$ such that for all $j,k\geq 1$,
\begin{align}
&C_1k\leq E[\Delta_{k,j}^2]\leq C_2k,\label{e40}\\
&C_1j\leq \Var[T(0,\mathcal {C}_j)]\leq C_2j.\label{e41}
\end{align}
\end{lemma}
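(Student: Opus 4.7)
The proof centers on (\ref{e40}); once it is established, (\ref{e41}) follows from the martingale identity (\ref{e42}) specialized to $k=1$, $q=j$, since $\Var[T(0,\mathcal{C}_j)] = \sum_{i=0}^{j} E[\Delta_{1,i}^2]$ then lies in $[C_1 j, C_2 j]$ by summing (the contribution from $i=0$ is absorbed in constants).

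For the upper bound in (\ref{e40}), write $\Delta_{k,j}(\omega) = X_1 + X_2 - X_3$ as in Lemma \ref{l7}(ii). The three circuits $\mathcal{C}_{k(j-1)}(\omega)$, $\mathcal{C}_{kj}(\omega)$, $\mathcal{C}_{l(kj,\omega,\omega')}(\omega')$ are nested from inside to outside (by the definition of $l$ and Lemma \ref{l7}(i)), so any lattice path from the innermost to the outermost must cross the middle circuit. The ordinary triangle inequality, combined with splitting such a path at its first crossing of the middle circuit, yields in the configuration $\omega'$
\begin{equation*}
\bigl|\,T(\mathcal{C}_{k(j-1)}(\omega),\mathcal{C}_{l(kj,\omega,\omega')}(\omega'))(\omega') - T(\mathcal{C}_{k(j-1)}(\omega),\mathcal{C}_{kj}(\omega))(\omega') - T(\mathcal{C}_{kj}(\omega),\mathcal{C}_{l(kj,\omega,\omega')}(\omega'))(\omega')\,\bigr|\leq 1.
\end{equation*}
Taking $E'$ gives $X_2 - X_3 = -Z(\omega) + O(1)$ with $Z(\omega) := E'[T(\mathcal{C}_{k(j-1)}(\omega),\mathcal{C}_{kj}(\omega))(\omega')]$, hence $\Delta_{k,j}(\omega) = T_1(\omega) - Z(\omega) + O(1)$ where $T_1 := X_1$. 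Since $Z$ depends on $\omega$ only through the geometric shapes of the two circuits, $E[(T_1 - Z)^2] \leq \Var[T_1] + O(1)$, and the upper bound reduces to showing $\Var[T_1] = O(k)$.

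For this variance bound, Proposition \ref{l1} expresses $T_1$ as a count of disjoint yellow circuits in the annular region between the two circuits, and we dominate this by $\sum_i \rho(2^i,2^{i+1}) + \sum_i S_i$ over $O(k)$ consecutive dyadic scales (Lemma \ref{l7}(i) controls the effective range of $i$ up to exponential tails). Each $\rho(2^i,2^{i+1})$ has $O(1)$ variance by Lemma \ref{l10} applied with $R/r = 2$ and the $\rho(2^i,2^{i+1})$ for different $i$ are independent by the spatial product structure of site percolation; each $S_i$ has $O(1)$ variance by Lemma \ref{l14}. Summing yields $\Var[T_1] = O(k)$ and hence $E[\Delta_{k,j}^2] \leq C_2 k$. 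For the lower bound in (\ref{e40}), the same representation gives $E[\Delta_{k,j}^2] \geq \tfrac{1}{2} E[(T_1 - Z)^2] - O(1)$; select $\Theta(k)$ well-separated dyadic sub-annuli inside the annulus between the two circuits, in each of which the existence of a yellow circuit surrounding $0$ is an event of probability bounded away from $0$ and $1$ by RSW and FKG. These $\Theta(k)$ events are independent by the spatial product structure, and each contributes $\Omega(1)$ fluctuation to $T_1\mid \text{circuits}$, so $E[\Var[T_1\mid\text{circuits}]] \geq c k$ and hence $E[\Delta_{k,j}^2] \geq C_1 k$.

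The main obstacle will be the clean verification of the triangle-inequality cancellation $X_2 - X_3 = -Z + O(1)$: it is here that the martingale cancellation among three quantities, each of mean $\Theta(k)$, is recorded quantitatively, reducing $E[\Delta_{k,j}^2]$ to the variance of a single centered passage time. Once this reduction is in place, both the upper and lower bounds are consequences of standard dyadic-scale analysis for critical planar percolation via Lemmas \ref{l7}, \ref{l10}, \ref{l14} and RSW/FKG.
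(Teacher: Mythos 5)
Your proposal takes a genuinely different route from the paper, and unfortunately it contains two gaps that break the argument. The paper's own proof of this lemma is very short: it observes that (\ref{e41}) \emph{is} Kesten--Zhang's (2.32), that the proof of (2.32) in \cite{10} already yields universal constants with $C_1\leq E[\Delta_{1,n}^2]\leq C_2$ for all $n$, and that since $\Delta_{k,j}=\sum_{n=k(j-1)+1}^{kj}\Delta_{1,n}$ (because $\Delta_{k,j}$ is independent of $q$ by Lemma~\ref{l7}(ii), the coarse martingale increment telescopes into $k$ consecutive fine increments), the orthogonality of martingale increments gives $E[\Delta_{k,j}^2]=\sum_{n=k(j-1)+1}^{kj}E[\Delta_{1,n}^2]$, whence (\ref{e40}). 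You never invoke this additivity, which is the entire content of the paper's proof.

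The first gap is your ``triangle inequality'' step. You claim
$\bigl|\,T(\mathcal{C}_{k(j-1)},\mathcal{C}_l)(\omega') - T(\mathcal{C}_{k(j-1)},\mathcal{C}_{kj})(\omega') - T(\mathcal{C}_{kj},\mathcal{C}_l)(\omega')\,\bigr|\leq 1$.
The lower-bound direction (splitting a geodesic at its first hit of the middle circuit) is fine, but the upper-bound direction fails: to concatenate an optimal path for $T(\mathcal{C}_{k(j-1)},\mathcal{C}_{kj})(\omega')$ with one for $T(\mathcal{C}_{kj},\mathcal{C}_l)(\omega')$, you must join them along $\mathcal{C}_{kj}(\omega)$, which is a blue circuit \emph{in $\omega$} but carries independent uniform colors in $\omega'$ and hence has macroscopic passage time there. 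The ``ordinary triangle inequality'' for set-to-set passage times does not give the clean $O(1)$ error you assert. (The paper's Lemma~\ref{l8} avoids this by showing $X_2=O(1)$ a.s.\ and $X_3\approx E[T_{k,j}]$ up to exponential tails, a constant rather than your random $Z(\omega)$.)

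The second gap is the claim $E[(T_1-Z)^2]\leq \Var[T_1]+O(1)$ ``since $Z$ depends on $\omega$ only through the geometric shapes''. That inequality would follow if $Z$ were a conditional expectation of $T_1$ given the shapes, but it is not: conditionally on the shapes of $\mathcal{C}_{k(j-1)}$ and $\mathcal{C}_{kj}$, the colors of the sites strictly between them are \emph{not} i.i.d.\ uniform, because they are constrained by the events that no blue circuit surrounds $0$ at the intermediate dyadic scales (this is built into the definitions of $m(\cdot)$ and $\mathcal{C}_{\cdot}$). So $Z=E'[T(\mathcal{C}_{k(j-1)}(\omega),\mathcal{C}_{kj}(\omega))(\omega')]\neq E[T_1\mid\text{shapes}]$, and the asserted reduction to $\Var[T_1]$ is unjustified. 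Given that the paper's route is a two-line citation of \cite{10} plus martingale orthogonality, I would recommend adopting it; if you want a self-contained argument, the right template is the paper's proof of Lemma~\ref{l8}, which controls $\Delta_{k,j}-(T_{k,j}-E[T_{k,j}])$ with exponential tails rather than an exact $O(1)$ triangle-inequality identity.
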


\begin{proof}
From (\ref{e42}), we know that (\ref{e41}) is just (2.32) in
\cite{10}.  The proof of (2.32) in \cite{10} implies that there
exist universal $C_1,C_2>0$ such that for all $n\in\mathbb{N}$,
\begin{equation*}
C_1\leq E[\Delta_{1,n}^2]\leq C_2.
\end{equation*}
Since
\begin{equation*}
E[\Delta_{k,j}^2]=\sum_{n=kj-k+1}^{kj} E[\Delta_{1,n}^2],
\end{equation*}
then (\ref{e40}) follows.
\end{proof}

The next lemma implies that for fixed $k$ and large $q$, if
$2^{kq}\leq n\leq 2^{k(q+1)}$,  the variance of $c_n$ is well
approximated by that of $T(0,\mathcal {C}_{kq})$.
\begin{lemma}\label{l13}
There exists $C>0$, such that for all $q\geq k\geq 1$ and
$2^{kq}\leq n\leq 2^{k(q+1)}$,
\begin{equation}\label{e37}
\left|\Var[T(0,\mathcal {C}_{kq})]-\Var[c_n]\right|\leq
C\sqrt{q}k^{3/2}.
\end{equation}
Furthermore, there exist $C_1,C_2>0$ such that for all $n\geq 2$,
\begin{equation}\label{e43}
C_1\log n\leq\Var[c_n]\leq C_2\log n.
\end{equation}
\end{lemma}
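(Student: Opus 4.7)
The plan is to compare $c_n$ with the martingale-friendly quantity $T(0,\mathcal{C}_{kq})$ in $L^2$, then convert the difference into a variance comparison via $|\sqrt{\Var X}-\sqrt{\Var Y}|\leq\|X-Y\|_2$ (the same device used in the proof of (\ref{e10})). Once I show $\|c_n-T(0,\mathcal{C}_{kq})\|_2\leq Ck$, combining it with $\sqrt{\Var[T(0,\mathcal{C}_{kq})]}\leq C\sqrt{kq}$ from (\ref{e41}) gives $\sqrt{\Var[c_n]}\leq C\sqrt{kq}$ (using $k\leq q$, so $k\leq\sqrt{kq}$), and multiplying yields
\begin{equation*}
|\Var[c_n]-\Var[T(0,\mathcal{C}_{kq})]|=\bigl|\sqrt{\Var[c_n]}-\sqrt{\Var[T(0,\mathcal{C}_{kq})]}\bigr|\cdot\bigl(\sqrt{\Var[c_n]}+\sqrt{\Var[T(0,\mathcal{C}_{kq})]}\bigr)\leq Ck\cdot C\sqrt{kq}=Ck^{3/2}\sqrt{q},
\end{equation*}
which is (\ref{e37}).

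The heart of the argument is the pointwise bound on $|c_n-T(0,\mathcal{C}_{kq})|$. Conditional on $\{m(kq)=kq+t\}$, the circuit $\mathcal{C}_{kq}$ is contained in $A(kq+t)=A(2^{kq+t},2^{kq+t+1})$ and, being entirely blue, costs nothing either to cross or to traverse. I would split into three geometric cases according to the position of $\partial B(n)$ relative to $\mathcal{C}_{kq}$: (a) $n\geq 2^{kq+t+1}$, so $\mathcal{C}_{kq}$ is enclosed by $B(n)$; (b) $n\leq 2^{kq+t}$, so $\mathcal{C}_{kq}$ encloses $\partial B(n)$; (c) the intermediate case $2^{kq+t}<n<2^{kq+t+1}$. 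In (a), any path from $0$ to $\partial B(n)$ must cross $\mathcal{C}_{kq}$, giving $c_n\geq T(0,\mathcal{C}_{kq})$; splicing an optimal path to $\mathcal{C}_{kq}$ with an optimal crossing of $A(2^{kq+t},n)$ through $\mathcal{C}_{kq}$ at zero cost gives $c_n\leq T(0,\mathcal{C}_{kq})+T'(2^{kq+t},n)$. Cases (b) and (c) are handled analogously. Using the obvious monotonicity of $T'$ in its annulus, all three cases collapse to
\begin{equation*}
|c_n-T(0,\mathcal{C}_{kq})|\leq T'\bigl(2^{kq},\,2^{k(q+1)+t+1}\bigr),
\end{equation*}
an annulus crossing on an annulus of logarithmic width $O(k+t)$.

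To upgrade this to an $L^2$ bound, Lemma \ref{l10} implies $E[(T'(2^{kq},2^{k(q+1)+t+1}))^p]\leq C_p(k+t)^p$ for every $p\geq 1$, uniformly in $k,q,t$, and Lemma \ref{l7}(i) gives $P[m(kq)\geq kq+t]\leq e^{-Ct}$. Partitioning on the value of $m(kq)-kq$ and applying Cauchy--Schwarz on each slice,
\begin{equation*}
E\bigl[(c_n-T(0,\mathcal{C}_{kq}))^2\bigr]\leq\sum_{t\geq 0}E[(T'(2^{kq},2^{k(q+1)+t+1}))^4]^{1/2}\,P[m(kq)=kq+t]^{1/2}\leq C\sum_{t\geq 0}(k+t)^2 e^{-Ct/2}\leq Ck^2,
\end{equation*}
which is the required bound $\|c_n-T(0,\mathcal{C}_{kq})\|_2\leq Ck$.

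For (\ref{e43}) I would specialize (\ref{e37}) to $k=1$ and $q=\lfloor\log_2 n\rfloor\geq 1$, obtaining $|\Var[c_n]-\Var[T(0,\mathcal{C}_q)]|\leq C\sqrt{q}$; combined with $C_1 q\leq\Var[T(0,\mathcal{C}_q)]\leq C_2 q$ from (\ref{e41}), this yields $\Var[c_n]=\Theta(q)=\Theta(\log n)$ for $q$ large, while the finitely many small values of $n$ are absorbed into the constants using the trivial deterministic bound $c_n\leq|B(n)|$. I expect the main technical obstacle to be the casework producing the pointwise $T'$ bound --- in particular, verifying carefully in each geometric configuration that the splicing through $\mathcal{C}_{kq}$ legitimately yields an annulus crossing time on the appropriate annulus; once that is in place the rest is a routine Cauchy--Schwarz against the exponential tails already supplied by Lemmas \ref{l10} and \ref{l7}.
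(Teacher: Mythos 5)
Your overall strategy is the same as the paper's: bound $\|c_n - T(0,\mathcal{C}_{kq})\|_2 \leq Ck$, then convert to a variance comparison via the triangle inequality for $\|\cdot\|_2$ together with $\sqrt{\Var[T(0,\mathcal{C}_{kq})]}\leq C\sqrt{kq}$ from (\ref{e41}). That outline, and your deduction of (\ref{e43}) from (\ref{e37}) and (\ref{e41}), match the paper.

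However, your claimed pointwise bound $|c_n-T(0,\mathcal{C}_{kq})|\leq T'\bigl(2^{kq},\,2^{k(q+1)+t+1}\bigr)$ on $\{m(kq)=kq+t\}$ has a gap in cases (b) and (c). The splicing argument you give for case (a) is sound precisely because the two optimal paths are joined \emph{on the blue circuit} $\mathcal{C}_{kq}$, where traversal costs nothing. In case (b), when $\mathcal{C}_{kq}$ lies entirely outside $\partial B(n)$, you need to extend an optimal $c_n$-path (ending somewhere on $\partial B(n)$) by an optimal annulus-crossing path (starting somewhere else on $\partial B(n)$) in order to reach $\mathcal{C}_{kq}$; the junction now lives on $\partial B(n)$, which is a Euclidean circle, not a monochromatic blue circuit, so connecting the two endpoints is not free. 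The connecting arc can pick up a passage time equal to the number of disjoint yellow circuits surrounding $0$ that cross the circle $\partial B(n)$, and this quantity is not controlled by $T'$. The paper handles exactly this stitching cost via the random variable $S_j$ of Lemma \ref{l14} (with exponential tail (\ref{e38})), adding a third term $P[S_{kq}\geq x/2]$ to the tail estimate for $|T(0,\mathcal{C}_{kq})-c_n|$. Your argument omits this term, so the pointwise inequality you state is not literally true and the Cauchy--Schwarz step that follows is built on it. The fix is minor --- replace your bound by $|c_n - T(0,\mathcal{C}_{kq})| \leq T'(2^{kq},2^{kq+t+1}) + S_{kq}$ (or similar) and carry the extra $O(1)$-in-$L^2$ term through --- and with it the rest of your calculation goes through, yielding $\|c_n-T(0,\mathcal{C}_{kq})\|_2\leq Ck$ as intended.
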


\begin{proof}
Recall that $S_j$ denotes the maximum number of disjoint yellow
circuits that surround 0 and intersect with $\partial B(2^j)$.
Applying Lemma \ref{l10}, (\ref{e38}) and (\ref{e27}), there exist
$C_1,C_2>0$, such that for all $x\geq 3Kk$, where $K$ is from Lemma
\ref{l10}, we have
\begin{align*}
&P[|T(0,\mathcal {C}_{kq})-c_n|\geq x]\\
&\leq P[m(kq)\geq kq+\lfloor
x/(2K)\rfloor]+P[T'(2^{kq},2^{kq+\lfloor
x/(2K)\rfloor})\geq x/2 ]+P[S_{kq}\geq x/2]\\
&\leq C_1\exp(-C_2x).
\end{align*}
Then there is a universal $C_3>0$, such that
\begin{equation*}
E[|T(0,\mathcal {C}_{kq})-c_n|^2]\leq C_3k^2.
\end{equation*}
Therefore, by the triangle inequality for the norm
$\|\cdot\|_2=\sqrt{E[|\cdot|^2]}$, we have
\begin{align*}
\left|\sqrt{\Var[T(0,\mathcal
{C}_{kq})]}-\sqrt{\Var[c_n]}\right|&\leq \sqrt{\Var[T(0,\mathcal
{C}_{kq})-c_n]}\leq \sqrt{C_3}k.
\end{align*}
Combining this and (\ref{e41}) gives (\ref{e37}).

(\ref{e43}) follows from (\ref{e41}) and (\ref{e37}) immediately.
\end{proof}

Recall that $T_{k,j}:=T'(2^{k(j-1)},2^{kj})$.  The following is a
key estimate for $T_{k,j}$,  which says that for large $k$, the
variance of  $T_{k,j}$ is well approximated by that of
$\Delta_{k,j}$ (since $E[\Delta_{k,j}^2]\asymp k$, from
(\ref{e40})).

\begin{lemma}\label{l8}
There exists a constant $C>1$, such that for all $k,j\in\mathbb{N}$,
\begin{align*}
\left|\Var[T_{k,j}]-E[\Delta_{k,j}^2]\right|\leq C\sqrt{k}.
\end{align*}
\end{lemma}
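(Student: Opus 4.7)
The plan is to use the martingale-increment formula from Lemma \ref{l7}(ii) to compare $\Delta_{k,j}$ with $T_{k,j} - E[T_{k,j}]$ in $L^2$, then invoke
\begin{equation*}
\bigl|\|X\|_2^2 - \|Y\|_2^2\bigr| \leq \|X-Y\|_2\bigl(\|X\|_2 + \|Y\|_2\bigr).
\end{equation*}
Because $\|\Delta_{k,j}\|_2 = O(\sqrt{k})$ by (\ref{e40}) and $\|T_{k,j} - E[T_{k,j}]\|_2 = O(\sqrt{k})$ (obtainable from Lemma \ref{l13} after relating $T_{k,j}$ to $c_{2^{kj}}$, or directly from Lemma \ref{l10}), it suffices to show $\|\Delta_{k,j} - (T_{k,j} - E[T_{k,j}])\|_2 \leq C$ with $C$ independent of $k,j$. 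With $\tilde T := T(\mathcal{C}_{k(j-1)}, \mathcal{C}_{kj})$,
\begin{align*}
\alpha &:= E'\bigl[T(\mathcal{C}_{kj}(\omega), \mathcal{C}_{l(kj,\omega,\omega')}(\omega'))(\omega')\bigr], \\
\beta &:= E'\bigl[T(\mathcal{C}_{k(j-1)}(\omega), \mathcal{C}_{l(kj,\omega,\omega')}(\omega'))(\omega')\bigr],
\end{align*}
Lemma \ref{l7}(ii) gives $\Delta_{k,j}=\tilde T + \alpha - \beta$, and $E[\Delta_{k,j}]=0$ implies
\begin{equation*}
\Delta_{k,j} - (T_{k,j} - E[T_{k,j}]) = \bigl[(\tilde T - T_{k,j}) - E(\tilde T - T_{k,j})\bigr] + (\alpha - E[\alpha]) - (\beta - E[\beta]).
\end{equation*}

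The first and second summands are controlled by routine applications of existing tools. For $\|\tilde T - T_{k,j}\|_2$: $m(k(j-1))$ and $m(kj)$ exceed $k(j-1), kj$ by amounts with exponential tails (Lemma \ref{l7}(i)), so $\tilde T - T_{k,j}$ is dominated by a passage time through $O(1)$ additional annular layers plus the circuit-thickness correction $S_x$ of Lemma \ref{l14}; Lemma \ref{l10} then yields $L^2$-norm $O(1)$. For $\|\alpha\|_2$: Jensen's inequality gives $\|\alpha\|_2^2 \leq EE'[T(\mathcal{C}_{kj},\mathcal{C}_{l(kj,\omega,\omega')}(\omega'))(\omega')^2]$, a second moment of an FPT across the $O(1)$ scales $l(kj)-m(kj)\geq 1$; Lemma \ref{l7}(i) applied in $\omega'$ together with Lemma \ref{l10} produces $\|\alpha\|_2 \leq C$.

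The substantive step, and the main obstacle, is $\|\beta - E[\beta]\|_2 \leq C$. Although $E[\beta]$ is of order $k$, averaging over the independent environment $\omega'$ should kill the $\Theta(\sqrt{k})$ fluctuations that a raw FPT across $k$ scales carries, leaving only the $O(1)$ variability from the $\omega$-dependent data $\mathcal{C}_{k(j-1)}$ and $m(kj)$. I would make this precise via path concatenation and planarity: for every realization,
\begin{equation*}
T(\mathcal{C}_{k(j-1)},\mathcal{C}_{l(kj,\omega,\omega')}(\omega'))(\omega') \leq T(\mathcal{C}_{k(j-1)},\mathcal{C}_{kj})(\omega') + T(\mathcal{C}_{kj},\mathcal{C}_{l(kj,\omega,\omega')}(\omega'))(\omega'),
\end{equation*}
with a matching lower bound up to $t(v)\in\{0,1\}$ obtained by cutting any realizing path at its first crossing $v\in\mathcal{C}_{kj}$ (which must exist by planarity). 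Taking $E'$ gives $\beta=\gamma+\alpha+R$ with $\gamma := E'[T(\mathcal{C}_{k(j-1)},\mathcal{C}_{kj})(\omega')]$ and $|R|\leq 1$ almost surely, so $\|\beta - E[\beta]\|_2 \leq \|\gamma - E[\gamma]\|_2 + \|\alpha - E[\alpha]\|_2 + O(1)$. Finally $\|\gamma - E[\gamma]\|_2 = O(1)$ should follow from the fact that $\gamma$ depends on $\omega$ only through the two circuits, that the scales $m(k(j-1)), m(kj)$ have $O(1)$ variance (Lemma \ref{l7}(i)), and an RSW/FKG shape-insensitivity argument showing that $E'[T(\mathcal{C},\mathcal{C}')(\omega')]$ between two circuits at fixed scales $r<s$ is determined up to $O(1)$ by $s-r$, independently of the circuit shapes.

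Combining the three $L^2$ bounds gives $\|\Delta_{k,j} - (T_{k,j} - E[T_{k,j}])\|_2\leq C$, and the reverse-triangle identity above finishes the proof. The hardest piece to rigorize is the shape-insensitivity claim for $\gamma$: even a stray logarithmic factor lurking there would destroy the $O(\sqrt{k})$ bound, so one must extract a uniform $O(1)$ estimate from the same 2D percolation toolkit (RSW, arm exponents) already invoked in Section \ref{s31}.
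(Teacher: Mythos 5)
Your proposal is essentially the paper's argument: Lemma \ref{l7}(ii) is used for the same decomposition, the conclusion follows from the same reverse-triangle-inequality step combined with $E[\Delta_{k,j}^2]\asymp k$ from Lemma \ref{l11}, and the work reduces to showing $\bigl\|T_{k,j}-E[T_{k,j}]-\Delta_{k,j}\bigr\|_2=O(1)$ (the paper proves the stronger uniform exponential tail bound (\ref{e24}) and then squares it). The ``shape-insensitivity'' step you single out as the main obstacle does not in fact require any new input beyond the toolkit you already cite: the paper handles it by comparing the circuit-to-circuit passage time $T(\mathcal{C}_{k(j-1)}(\omega),\mathcal{C}_{l(kj,\omega,\omega')}(\omega'))(\omega')$ to the dyadic-sphere passage time $T'(2^{m(k(j-1),\omega)},2^{m(kj,\omega)+1})(\omega')$, whose difference is controlled (uniformly in $\omega$, given the scales $m(\cdot,\omega)$) by Lemma \ref{l10} and Lemma \ref{l14} since a circuit in $A(j)$ is sandwiched between $\partial B(2^j)$ and $\partial B(2^{j+1})$; the remaining dependence on $m(k(j-1),\omega)-k(j-1)$ and $m(kj,\omega)-kj$ is killed by the exponential tails of Lemma \ref{l7}(i) together with the linear-in-separation mean bound (\ref{e22}). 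So there is no risk of a stray logarithm. Your cancellation of $\alpha$ in the decomposition $\beta=\gamma+\alpha+R$ is a small simplification the paper does not use but which is sound. One minor inaccuracy: the a priori bound $\|T_{k,j}-E[T_{k,j}]\|_2=O(\sqrt{k})$ does not follow ``directly from Lemma \ref{l10}'' (the exponential tail above $Kk$ only gives $O(k)$, not $O(\sqrt{k})$), nor cleanly from Lemma \ref{l13}; it is best obtained a posteriori from $\|\Delta_{k,j}\|_2=\Theta(\sqrt{k})$ plus the $O(1)$ closeness, exactly as the paper does.
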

\begin{proof}
We claim that there exist universal constants $C_1,C_2>0$ such that
for all $k,j\in\mathbb{N}$ and $x>0$,
\begin{equation}\label{e24}
P[|T_{k,j}-E[T_{k,j}]-\Delta_{k,j}|\geq x]\leq C_1\exp(-C_2x).
\end{equation}
Therefore, there is a $C_3>0$ such that
\begin{equation*}
E[|T_{k,j}-E[T_{k,j}]-\Delta_{k,j}|^2]\leq C_3.
\end{equation*}
Then we obtain
\begin{equation*}
|\{\Var[T_{k,j}]\}^{1/2}-\{\Var[\Delta_{k,j}]\}^{1/2}|\leq
\{E[|T_{k,j}-E[T_{k,j}]-\Delta_{k,j}|^2]\}^{1/2}\leq \sqrt{C_3}.
\end{equation*}
This and (\ref{e40}) imply the desired result:
\begin{equation*}
\left|\Var[T_{k,j}]-E[\Delta_{k,j}^2]\right|\leq
2\sqrt{C_3E[\Delta_{k,j}^2]}+C_3\leq C\sqrt{k}.
\end{equation*}

We now show our claim (\ref{e24}).  By (\ref{e28}), proving
(\ref{e24}) boils down to proving that there exist
$C_4,C_5,C_6,C_7>0$ such that
\begin{align}
&P[|T_{k,j}-T(\mathcal {C}_{k(j-1)},\mathcal {C}_{kj})|\geq
x]\leq C_4\exp(-C_5x),\label{e25}\\
&P[|E[T_{k,j}]+E'[T(\mathcal {C}_{kj}(\omega),\mathcal
{C}_{l(kj,\omega,\omega')}(\omega'))]-E'[T(\mathcal
{C}_{k(j-1)}(\omega),\mathcal
{C}_{l(kj,\omega,\omega')}(\omega'))]|\geq x]\nonumber\\
&~~\leq C_6\exp(-C_7x).\label{e26}
\end{align}

We first prove (\ref{e25}).  Let $C_8=1/(4K)$, where $K$ is from
Lemma \ref{l10}.  There exist $C_9,C_{10},C_{11},C_{12}>0$ such that
if $0<x\leq k/C_8$,
\begin{align*}
&P[|T_{k,j}-T(\mathcal {C}_{k(j-1)},\mathcal {C}_{kj})|\geq
x]\\
&\leq P[m(k(j-1))\geq k(j-1)+\lfloor C_8x\rfloor]+P[m(kj-\lfloor
C_8x\rfloor)\geq
kj]+P[m(kj)\geq kj+\lfloor C_8x\rfloor]\\
&~~~~+P[T'(2^{k(j-1)},2^{k(j-1)+\lfloor C_8x\rfloor})\geq
x/2]+P[T'(2^{kj-\lfloor C_8x\rfloor},2^{kj+\lfloor C_8x\rfloor})\geq x/2]\\
&\leq 3C_9\exp(-C_{10}x)+2C_{11}\exp(-C_{12}x)~~\mbox{ by
(\ref{e27}) and Lemma \ref{l10}};
\end{align*}
if $x\geq k/C_8$,
\begin{align*}
&P[|T_{k,j}-T(\mathcal {C}_{k(j-1)},\mathcal {C}_{kj})|\geq
x]\\
&~~~~\leq P[m(kj)\geq kj+\lfloor
C_8x\rfloor]+P[T'(2^{k(j-1)},2^{kj+\lfloor C_8x\rfloor})\geq
x]\\
&~~~~\leq C_9\exp(-C_{10}x)+C_{11}\exp(-C_{12}x)~~\mbox{ by
(\ref{e27}) and Lemma \ref{l10}}.
\end{align*}
Then (\ref{e25}) follows immediately.

Now let us show (\ref{e26}), which follows from the two inequalities
below:
\begin{align}
&E'[T(\mathcal {C}_{kj}(\omega),\mathcal
{C}_{l(kj,\omega,\omega')}(\omega'))]\leq C_{13}~~a.s.,\label{e29}\\
&P[|E'[T_{k,j}(\omega')]-E'[T(\mathcal {C}_{k(j-1)}(\omega),\mathcal
{C}_{l(kj,\omega,\omega')}(\omega'))]|\geq x]\leq
C_{14}\exp(-C_{15}x),\label{e30}
\end{align}
where $C_{13},C_{14},C_{15}>0$ are universal constants.  Using
(\ref{e27}) and Lemma \ref{l10} again, we know there exist
$C_{16},C_{17}>0$ such that for all $\mathcal {C}_{kj}(\omega)$,
\begin{align*}
&P'[T(\mathcal {C}_{kj}(\omega),\mathcal
{C}_{l(kj,\omega,\omega')}(\omega'))\geq x]\\
&~~~~\leq P'[l(kj,\omega,\omega')\geq m(kj,\omega)+\lfloor
C_8x\rfloor]+P'[T'(2^{m(kj,\omega)},2^{m(kj,\omega)+\lfloor C_8x\rfloor})\geq x]\\
&~~~~\leq C_{16}\exp(-C_{17}x),
\end{align*}
which implies (\ref{e29}).

It remains to show (\ref{e30}).  Similarly to the proof of
(\ref{e25}), one can show that there exist $C_{18},C_{19}>0$, such
that
\begin{equation*}
P'[|T_{m(k(j-1),\omega),m(kj,\omega)+1}(\omega')-T(\mathcal
{C}_{k(j-1)}(\omega),\mathcal
{C}_{l(kj,\omega,\omega')}(\omega'))|\geq x]\leq
C_{18}\exp(-C_{19}x),
\end{equation*}
which implies that there is a universal $C_{20}>0$, such that
\begin{equation*}
\left|E'[T_{m(k(j-1),\omega),m(kj,\omega)+1}(\omega')]-E'[T(\mathcal
{C}_{k(j-1)}(\omega),\mathcal
{C}_{l(kj,\omega,\omega')}(\omega'))]\right|\leq C_{20}.
\end{equation*}
From this proving (\ref{e30}) boils down to proving that there are
$C_{21},C_{22}>0$ such that
\begin{equation}\label{e33}
P[E'[T_{k,j}(\omega')]-E'[T_{m(k(j-1),\omega),m(kj,\omega)+1}(\omega')]\geq
x]\leq C_{21}\exp(-C_{22}x).
\end{equation}
Combining (\ref{e22}) and (\ref{e31}) gives that there exists
$C_{24}>0$, such that for all $x\geq 1$ and $j\geq 0$,
\begin{equation}\label{e32}
E[T'(2^j,2^{j+x+1})+S_{j}+S_{j+x+1}]\leq x/(2C_{24}).
\end{equation}
Then, using (\ref{e32}) and (\ref{e27}), we get that there exist
$C_{25},C_{26}>0$ such that if $1/C_{24}\leq x\leq k/C_{24}$,
\begin{align*}
&P[|E'[T_{k,j}(\omega')]-E'[T_{m(k(j-1),\omega),m(kj,\omega)+1}(\omega')]|\geq
x]\\
&~~~~\leq P[m(k(j-1))\geq k(j-1)+\lfloor C_{24}x\rfloor]+P[m(kj)\geq
kj+\lfloor C_{24}x\rfloor]\\
&~~~~~~~~~+P[E'[T'(2^{k(j-1)},2^{k(j-1)+\lfloor
C_{24}x\rfloor+1})+S_{k(j-1)+\lfloor C_{24}x\rfloor+1}]\geq
x]\\
&~~~~~~~~~+P[E'[T'(2^{kj},2^{kj+\lfloor C_{24}x\rfloor+1})+S_{kj}]\geq x]\\
&~~~~\leq C_{25}\exp(-C_{26}x) \mbox{ (note that the last two terms
above equal 0});
\end{align*}
if $x\geq k/C_{24}$,
\begin{align*}
&P[|E'[T_{k,j}(\omega')]-E'[T_{m(k(j-1),\omega),m(kj,\omega)+1}(\omega')]|\geq
x]\\
&\leq P[m(kj)\geq kj+\lfloor
C_{24}x\rfloor]+P[E'[T'(2^{k(j-1)},2^{kj+\lfloor
C_{24}x\rfloor+1})]\geq x]\leq C_{25}\exp(-C_{26}x).
\end{align*}
Then (\ref{e33}) follows immediately, which completes the proof.
\end{proof}

We are now ready to prove the limit result for $\Var [c_n]$:
\begin{proposition}\label{l5}
\begin{align*}
\lim_{n\rightarrow\infty}\frac{\Var [c_n]}{\log
n}=\frac{2}{3\sqrt{3}\pi}-\frac{1}{2\pi^2}.
\end{align*}
\end{proposition}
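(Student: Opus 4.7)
The plan is to combine the martingale decomposition of $\Var[T(0,\mathcal{C}_{kq})]$ with the scaling limit result $\Var[T_{k,j}] \to \Var[N(1/2^k)]$, and then pass to the CLE$_6$ limit via Proposition \ref{l2}. We will take a double limit: first $n\to\infty$ with $k$ fixed, then $k\to\infty$, controlling error terms carefully in both variables.

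First, fix a large integer $k$. For each $n\geq 2^k$, set $q=q_n:=\lfloor\log_{2^k}n\rfloor$, so $2^{kq}\le n<2^{k(q+1)}$ and $kq\log 2\le \log n<k(q+1)\log 2$; in particular $q\to\infty$ as $n\to\infty$ and $\log n\sim kq\log 2$. By Lemma \ref{l13},
\begin{equation*}
\bigl|\Var[T(0,\mathcal{C}_{kq})]-\Var[c_n]\bigr|\le C\sqrt{q}\,k^{3/2},
\end{equation*}
so $|\Var[T(0,\mathcal{C}_{kq})]-\Var[c_n]|/\log n=O(\sqrt{k/q})\to 0$ as $n\to\infty$ with $k$ fixed. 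It therefore suffices to analyze $\Var[T(0,\mathcal{C}_{kq})]$.

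Next, apply the martingale decomposition (\ref{e42}):
\begin{equation*}
\Var[T(0,\mathcal{C}_{kq})]=\sum_{j=0}^{q}E[\Delta_{k,j}^2].
\end{equation*}
The $j=0$ term is $\le Ck$ by (\ref{e40}), which is negligible after dividing by $\log n$. For $j\ge 1$, Lemma \ref{l8} gives $|E[\Delta_{k,j}^2]-\Var[T_{k,j}]|\le C\sqrt{k}$, so
\begin{equation*}
\Bigl|\Var[T(0,\mathcal{C}_{kq})]-\sum_{j=1}^{q}\Var[T_{k,j}]\Bigr|\le Ck+Cq\sqrt{k},
\end{equation*}
and after dividing by $\log n\asymp kq$, the error is at most $C/(q\log 2)+C/(\sqrt{k}\log 2)$, which tends to $C/(\sqrt{k}\log 2)$ as $n\to\infty$ with $k$ fixed.

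Now I use Proposition \ref{l3} to identify the Cesàro limit. For fixed $k$, applying (\ref{e17}) with the first and second moments gives $E[T_{k,j}]\to\mathbb{E}[N(1/2^k)]$ and $E[T_{k,j}^2]\to\mathbb{E}[N(1/2^k)^2]$ as $j\to\infty$, hence $\Var[T_{k,j}]\to\Var[N(1/2^k)]$. By Cesàro averaging, $\frac{1}{q}\sum_{j=1}^{q}\Var[T_{k,j}]\to\Var[N(1/2^k)]$ as $q\to\infty$. Combined with $q/\log n\to 1/(k\log 2)$, this yields
\begin{equation*}
\lim_{n\to\infty}\frac{\sum_{j=1}^{q_n}\Var[T_{k,j}]}{\log n}=\frac{\Var[N(1/2^k)]}{k\log 2}.
\end{equation*}
Assembling the three steps above,
\begin{equation*}
\limsup_{n\to\infty}\Bigl|\frac{\Var[c_n]}{\log n}-\frac{\Var[N(1/2^k)]}{k\log 2}\Bigr|\le\frac{C}{\sqrt{k}\log 2}.
\end{equation*}

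Finally, Proposition \ref{l2} gives $\Var[N(1/2^k)]/(k\log 2)\to 2/(3\sqrt{3}\pi)-1/(2\pi^2)$ as $k\to\infty$. Letting $k\to\infty$ in the previous display shows that both $\limsup$ and $\liminf$ of $\Var[c_n]/\log n$ equal $2/(3\sqrt{3}\pi)-1/(2\pi^2)$, completing the proof. The main obstacle is bookkeeping in the double limit: the error in Lemma \ref{l8} grows like $\sqrt{k}$ per annulus and there are $q$ annuli, so we must check that $q\sqrt{k}=o(kq)$ uniformly, which forces us to take $n\to\infty$ before $k\to\infty$ and yields only the quantitative rate $O(1/\sqrt{k})$; this is why the argument cannot give an explicit second-order remainder as in the remark after Proposition \ref{l2}.
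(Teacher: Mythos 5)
Your proof is correct and follows essentially the same route as the paper: Lemma~\ref{l13} to transfer from $\Var[c_n]$ to $\Var[T(0,\mathcal{C}_{kq})]$, the martingale decomposition (\ref{e42}) plus Lemma~\ref{l8} to replace $E[\Delta_{k,j}^2]$ by $\Var[T_{k,j}]$, Ces\`aro averaging with (\ref{e17}) to get $\Var[N(1/2^k)]$, and finally Proposition~\ref{l2} as $k\to\infty$. The only cosmetic difference is that you invoke (\ref{e40}) (which is stated for $j\ge1$) to bound the $j=0$ term by $Ck$, whereas $E[\Delta_{k,0}^2]=E[\Delta_{1,0}^2]$ is actually $O(1)$; either way the term is negligible after dividing by $\log n\asymp kq$, so this does not affect the argument.
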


\begin{proof}
Applying Lemma \ref{l8}, we have
\begin{equation}\label{e34}
\left|\Var[T(0,\mathcal
{C}_{kq})]-\sum_{j=1}^{q}\Var[T_{k,j}]\right|\leq Cq\sqrt{k}+1,
\end{equation}
where $C$ is from Lemma \ref{l8}.  By the convergence of the
Ces\`{a}ro mean and (\ref{e17}), we obtain
\begin{equation}\label{e35}
\lim_{q\rightarrow\infty}\frac{\sum_{j=1}^q
\Var[T_{k,j}]}{q}=\Var[N(1/2^k)].
\end{equation}
Recall the choice $q=\lfloor\log_{2^k}n\rfloor$.  Combining
(\ref{e10}), (\ref{e37}), (\ref{e43}), (\ref{e34}) and (\ref{e35})
gives the desired result:
\begin{equation*}
\lim_{n\rightarrow\infty}\frac{\Var[c_n]}{\log
n}=\lim_{k\rightarrow\infty}\lim_{n\rightarrow\infty}\frac{\sum_{j=1}^{\lfloor\log_{2^k}
n\rfloor} \Var[T_{k,j}]}{\log
n}=\lim_{k\rightarrow\infty}\frac{\Var[N(1/2^k)]}{\log
(2^k)}=\frac{2}{3\sqrt{3}\pi}-\frac{1}{2\pi^2}.
\end{equation*}
\end{proof}

\subsection{Proofs of Theorem \ref{t1} and Corollary
\ref{c1}}\label{s34}

\begin{proof}[Proof of Theorem \ref{t1}]
The proof of Theorem 1.1 in \cite{20}, together with Proposition
\ref{l4}, implies (\ref{t14}) and (\ref{t13}) immediately.

By Lemma \ref{l10}, (\ref{e38}) and (\ref{e27}), there exist
$C_1,C_2>0$, such that for all $2^q\leq n<2^{q+1}$, $x\geq 4K$,
where $K$ is from Lemma \ref{l10}, we have
\begin{align*}
&P[b_{0,n}-c_n\geq x]\\
&\leq P[T(0,\mathcal {C}_{q+1})-c_{2^q}\geq x]\\
&\leq P[m(q+1)\geq q+\lfloor x/(2K)\rfloor]+P[T'(2^{q},2^{q+\lfloor
x/(2K)\rfloor})\geq
x/2]+P[S_{q}\geq x/2]\\
&\leq C_1\exp(-C_2x).
\end{align*}
Then there is a universal $C_3>0$, such that
\begin{equation}\label{e39}
E[|b_{0,n}-c_n|]\leq C_3,~~E[|b_{0,n}-c_n|^2]\leq C_3.
\end{equation}
Then (\ref{t12}) follows from (\ref{e14}) and (\ref{e39}).

By (\ref{e43}) and (\ref{e39}), one easily obtains that there exists
$C_4>0$ such that
\begin{equation*}
|\Var[b_{0,n}]-\Var[c_n]|\leq C_4\sqrt{\log n}.
\end{equation*}
Then (\ref{t22}) follows from this and Proposition \ref{l5}.

For $z\in A(2^q,2^{q+1})$ and $q\geq 2$, define
\begin{equation*}
Y(z):=T(0,\partial B(2^{q-1}))+T(z,\partial B(z,2^{q-1})).
\end{equation*}
It is obvious that $T(0,z)\geq Y(z)$ and
\begin{equation*}
T(0,z)-Y(z)\leq |T(0,\mathcal {C}_{q+2})-T(0,\partial
B(2^{q-1}))|+|T(z,\mathcal {C}_{q+2})-T(z,\partial B(z,2^{q-1}))|.
\end{equation*}
Using this inequality, similarly to the above argument, one can show
that there is a universal $C_5>0$ such that
\begin{equation}\label{e44}
E[|T(0,z)-Y(z)|]\leq C_5,~~E[|T(0,z)-Y(z)|^2]\leq C_5,
\end{equation}
see the proofs of Corollary 5.11 and 5.12 in \cite{21}.  Combining
(\ref{e14}) and (\ref{e44}) gives (\ref{t11}) immediately.  By
(\ref{e43}) and (\ref{e44}) and using the triangle inequality for
the norm $\|\cdot\|_2$, there exist universal $C_6>0$ such that
\begin{equation*}
|\Var[T(0,z)]-2\Var[c_{2^{q-1}}]|\leq C_6\sqrt{q}.
\end{equation*}
Then we derive (\ref{t21}) from this and Proposition \ref{l5},
finishing the proof.
\end{proof}

\begin{proof}[Proof of Corollary \ref{c1}]
Using the Theorem in \cite{10}, one can derive the following result
easily: As $n\rightarrow\infty$,
\begin{align}
&\frac{b_{0,n}-E[b_{0,n}]}{\sqrt{\Var
[b_{0,n}]}}\stackrel{d}\longrightarrow N(0,1),\label{e5}\\
&\frac{T(0,nu)-E[T(0,nu)]}{\sqrt{\Var
[T(0,nu)]}}\stackrel{d}\longrightarrow N(0,1).\label{e4}
\end{align}
Note that (\ref{e4}) is a particular case of Corollary 5.13 in
\cite{21}, and the proof of (\ref{e5}) is similar to that of
(\ref{e4}).  Corollary \ref{c1} follows from (\ref{e5}), (\ref{e4})
and Theorem \ref{t1} immediately.
\end{proof}

\section{A discussion on the limit shape}\label{s4}
Since there is a ``shape theorem" for general non-critical FPP (see
e.g. \cite{26}), it is natural to ask if there is an analogous
result for critical FPP. For that purpose, we view the sites of
$\mathbb{V}$ as hexagons, and define $W(n):=\{v\in
\mathbb{V}:T(0,v)\leq n\}$ for $n\in \mathbb{N}$.  Unlike the
non-critical case, there are ``large holes" in $W(n)$, and the
geometry of $W(n)$ is not easy to analyze.  Instead, it would be
easier to deal with the set $\overline{W}(n)$ obtained from $W(n)$
by filling in the holes. That is, $\overline{W}(n)$ is the closed
set surrounded by the outer boundary of $W(n)$.  We shall discuss
the limit boundary of $\overline{W}(n)$ below.

Similarly to the proof of Proposition \ref{l1}, we inductively
define the $n$-th innermost disjoint yellow circuit $\mathcal {C}_n$
surrounding 0.  For simplicity, if the hexagon centered at 0 is
yellow, we also call it the first innermost circuit surrounding 0.
Let $\mathcal {C}_n^*:=\mathcal {C}_n\cup$ blue clusters which touch
$\mathcal {C}_n$.  Observe that $\partial\overline{W}(n)$ is equal
to the outer boundary of $\mathcal {C}_n^*$.  By a color switching
trick one obtains that $\partial\overline{W}(n)$ is equal in
distribution to the $n$-th innermost cluster boundary loop $\mathcal
{L}_n$ surrounding 0. Thus, it is expected that as
$n\rightarrow\infty$, the limit boundary of $\overline{W}(n)$ under
appropriate scaling has the same distribution as a ``typical'' loop
of full-plane CLE$_6$.  Note that $\overline{W}(n)$ grows
approximately exponentially as time passes.

In \cite{31}, Kemppainen and Werner proved the invariance of
full-plane CLE$_{\kappa}$ under the inversion $z\mapsto 1/z$ for
$0<\kappa\leq 4$.  Although the analogous result has not been proved
for the case $4<\kappa<8$, Miller and Sheffield \cite{22} proved the
time-reversal symmetry of whole-plane SLE$_{\kappa}$ for $\kappa\in
(0,8]$.  To establish the following statement, for a loop $\mathcal
{L}$ that surrounds 0, we let $R_{in}(\mathcal
{L}):=\dist(0,\mathcal {L})$ denote the inner radius of $\mathcal
{L}$ and let $R_{out}(\mathcal
{L}):=\inf\{r:\overline{\mathbb{D}}_r\supset \mathcal {L}\}$ denote
the outer radius of $\mathcal {L}$.  By the argument above, it is
expected that the following statement holds: As
$n\rightarrow\infty$, the distributions of
$\partial\overline{W}(n)/R_{in}(\partial\overline{W}(n))$ and
$\partial\overline{W}(n)/R_{out}(\partial\overline{W}(n))$ converge,
and we denote the two limit distributions by $\mu_{in}$ and
$\mu_{out}$, respectively. Furthermore, $\mu_{out}$ is the image of
$\mu_{in}$ under $z\mapsto 1/z$.

\end{document}